\newcolumntype{L}{>$l<$}
\newtheorem{Thm}{Theorem}[section]
\newtheorem{Lem}[Thm]{Lemma}
\newtheorem{Pro}[Thm]{Proposition}
\def\blfootnote{\xdef\@thefnmark{}\@footnotetext}
\theoremstyle{definition}
\newtheorem{Def}[Thm]{Definition}
\theoremstyle{remark}
\newtheorem{Rem}[Thm]{\bf{Remark}}
\newcommand{\ConvFDD}{\overset{f.d.d.}{\longrightarrow}}
\newcommand{\Cov}{\mathrm{Cov}}
\newcommand{\E}{\mathbb{E}}
\title{Sensitivity of the Hermite rank}
\author{
Shuyang Bai \\
University of Georgia
\and
 Murad S. Taqqu\\
Boston University
}
\begin{document}
\maketitle
\begin{abstract}
\noindent The Hermite rank appears in   limit theorems involving long memory. We show that an Hermite rank  higher than one is unstable when the data is slightly perturbed by transformations such as shift and scaling. We carry out a ``near higher order rank analysis'' to illustrate how the limit theorems are affected by a  shift perturbation that is decreasing in size.    We also consider the case where the deterministic shift is replaced by centering with respect to the sample mean.  The paper is a   companion of \citet{bai:taqqu:2017:instability} which discusses the instability of the Hermite rank in the statistical context.
\end{abstract}
\blfootnote{
\begin{flushleft}
\textbf{Key words:} Long-range dependence; Long memory; Hermite rank; power rank;  limit theorem; instability;
\end{flushleft}
\textbf{2010 AMS Classification:} 62M10, 	60F05   \\
}

\section{Introduction}

A stationary sequence $\{X(n)\}$ with finite variance is said to have \emph{long memory} or  \emph{long-range dependence}, if
\begin{equation}\label{eq:cov LRD}
\Cov[X(n),X(0)]\approx n^{2H-2}
\end{equation}
as $n\rightarrow\infty$, where $\approx$ means asymptotic equivalence up to a positive constant\footnote{In general, we can allow inserting a slowly varying factor (\citet{bingham:goldie:teugels:1989:regular}), e.g.,  a logarithmic function,   in the asymptotic relation (\ref{eq:cov LRD}), but we shall not do that for simplicity. }, the parameter  $H\in (1/2,1)$ is called the \emph{Hurst index}.  See, e.g., the recent monographs   \citet{giraitis:koul:surgailis:2009:large}, \citet{beran:2013:long}, \citet{samorodnitsky:2016:stochastic} and \citet{pipiras:taqqu:2017:long}  for more information on the notion long memory.

The study of the asymptotic behavior of partial sums of long-memory sequences has been of great interest in probability and statistics. In particular, complete results have been obtained for the following  class of models called \emph{Gaussian subordination}. Let $\{Y(n)\}$ be a stationary Gaussian process with long memory in the sense of (\ref{eq:cov LRD}), which we suppose to have mean $0$ and variance $1$ without loss of generality. Now consider the transformed process
\begin{equation}
X(n)=G(Y(n)),
\end{equation}
where $G(\cdot):\mathbb{R}\rightarrow\mathbb{R}$ is a function such that $\E X(n)^2=\E G(Y(n))^2<\infty$.  The goal is to establish limit theorems for the normalized sums  of $X(n)$, the transformed stationary process.

To develop the limit theorems, one has to use the notion of Hermite rank, which is
  an integer  attached to the function $G(\cdot)$. It is defined as follows.
Let $Z$ denote a standard Gaussian random variable  and let $\gamma(dx)=\phi(x)dx:=(2\pi)^{-1/2} e^{-x^2/2} dx$  denote its distribution.  The Hermite rank is associated with the orthogonal decomposition  of $G(\cdot)$ in the space
\[L^2(\gamma)=L^2(\phi)=\{G(\cdot):~\E G(Z)^2<\infty\}=\left\{G(\cdot):~\int_{\mathbb{R}} G(z)^2 \phi(z)dz<\infty\right\}
\]  into \emph{Hermite polynomials}, which are defined as $H_0(x)=1$, $H_1(x)=x$, $H_2(x)=x^2-1$, and more generally,
\begin{equation}\label{eq:Herm Poly}
H_m(x)=(-1)^m e^{x^2/2}\frac{d}{dx^m}e^{-x^2/2} \quad\text{ for }\quad m\ge 1.
\end{equation}
 Then $\{\frac{1}{\sqrt{m!}}H_m(\cdot), m\ge 0\}$ forms an orthonormal basis of $L^2(\gamma)$ (see \citet{pipiras:taqqu:2017:long}, Proposition 5.1.3).

\begin{Def}\label{Def:Herm rank}
  The  \emph{Hermite  rank}  $k$ of $G(\cdot)\in L^2(\gamma)$ is defined as
\begin{equation}\label{eq:def herm rank}
k=\inf\left\{m\ge 1: ~\E G(Z) H_m(Z)=\int_{\mathbb{R}}G(x)H_m(x)\gamma(dx)\neq 0 \right\},
\end{equation}
where $H_m(\cdot)$ is the $m$-th order Hermite polynomial.
Equivalently, $k$ is the order of the first nonzero coefficient in the $L^2(\gamma)$-expansion:
\begin{equation}\label{eq:def herm rank expan}
G(\cdot)-\E G(Z)=\sum_{m=k}^\infty c_m H_m(\cdot), \quad k\ge 1,
\end{equation}
that is, $c_m=0$ for $m<k$ and $c_k\neq 0$ where
\begin{equation}\label{eq:Herm Coeff}
c_m={\E[G(Z)H_m(Z)]\over m!} ~~\text{ for }~~m\ge 0.
\end{equation}
\end{Def}
The following celebrated results due to \citet{dobrushin:major:1979:non},  \citet{taqqu:1979:convergence} and \citet{breuer:major:1983:central} have been established involving the Hermite rank.
\begin{Thm}\label{Thm:nclt gaussian}
Suppose that $G(\cdot)\in L^2(\gamma)$ has Hermite rank $k\ge 1$.  Then the following conclusions hold.

\noindent $\bullet$   Central limit case: suppose that $1/2<H<1-\frac{1}{2k}$ (this implies $k\ge 2$).  Then $\{G(X(n))\}$ has short memory in the sense that   \[\sigma^2:=\sum_{n=-\infty}^\infty \Cov[G(X(n)),G(X(0))]\] converges absolutely and
\begin{equation}\label{eq:Herm limit}
\frac{1}{N^{1/2}}\sum_{n=1}^{[Nt]} \Big( G(X(n))-\E G(X(n)) \Big) \ConvFDD \sigma B(t),\quad t\ge 0,
\end{equation}
where $\ConvFDD$ denotes convergence of the finite-dimensional distributions and $B(t)$ is the standard Brownian motion.

\noindent $\bullet$ Non-central limit case:  suppose that $1-\frac{1}{2k}<H<1$.
$\{G(X(n))\}$ has long memory with Hurst index:
\begin{equation}\label{eq:H_G general}
H_G=(H-1)k+1\in \left(\frac{1}{2},1\right).
\end{equation}
Furthermore, as $N\rightarrow\infty$, we have\footnote{In fact,  we have weak convergence in the space $D[0,1]$  with uniform metric.}
\begin{equation}\label{eq:Herm limit2}
\frac{1}{N^{H_G}}\sum_{n=1}^{[Nt]} \Big( G(X(n))-\E G(X(n)) \Big) \ConvFDD c Z_{H,k}(t),
\end{equation}
for some $c\neq 0$,  and
\begin{equation}\label{eq:Herm process}
Z_{H,k}(t)= \int_{\mathbb{R}^k}' \left[ \int_0^t \prod_{j=1}^k (s-x_j)_+^{H-3/2} ds\right]  B(dx_1)\ldots B(dx_k),
\end{equation}
is the so-called $k$-th order Hermite process,
where
 $\int_{\mathbb{R}^k}'   \cdot  \ B(dx_1)\ldots B(dx_k)$ denotes the $k$-tuple Wiener-It\^o integral with respect to the standard Brownian motion $B(\cdot)$ (\citet{major:2014:multiple}).
\end{Thm}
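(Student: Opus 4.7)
My plan is to prove both cases by exploiting the Hermite expansion $G - \E G(Z) = \sum_{m \geq k} c_m H_m$ and treating the sum term by term, relying on the fundamental identity
\[
\Cov\bigl[H_m(Y(i)), H_\ell(Y(j))\bigr] = \delta_{m\ell}\, m!\, r(i-j)^m,
\]
where $r(n) = \Cov[Y(n), Y(0)] \sim n^{2H-2}$. This reduces the question of asymptotics of partial sums to the behavior of $\sum_n r(n)^m$ for $m \geq k$. The threshold $H = 1 - 1/(2k)$ arises precisely because $\sum_n r(n)^m < \infty$ iff $m(2H-2) < -1$, i.e., $H < 1 - 1/(2m)$; for $m \geq k$ this is equivalent to $H < 1 - 1/(2k)$.

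\textbf{Central limit case.}  First I would check absolute summability of covariances: since $H < 1 - 1/(2k)$ we have $\sum_n |r(n)|^m < \infty$ for every $m \geq k$, and dominated convergence on the Hermite side gives
\[
\sigma^2 = \sum_{m \geq k} m!\, c_m^2 \sum_{n \in \mathbb{Z}} r(n)^m < \infty,
\]
using $\sum_m m!\, c_m^2 = \Var G(Z) < \infty$ together with $|r(n)|^m \leq |r(n)|^k$ eventually.  The finite-dimensional convergence can then be obtained either by the classical method of moments (verifying that all joint cumulants of $N^{-1/2}\sum H_m(Y(n))$ vanish asymptotically except the second), or, more cleanly, by the Breuer--Major chaos-by-chaos argument: truncate $G$ at a high Hermite order $M$, apply the Fourth Moment / Nualart--Peccati criterion (or an elementary diagram-formula computation) to each $H_m$ term with $k \leq m \leq M$, and control the tail $\sum_{m > M}$ uniformly in $N$ by its $L^2$ norm.

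\textbf{Non-central limit case.}  Here only the leading term $c_k H_k(Y(n))$ survives at the rate $N^{H_G}$; every higher Hermite component is negligible because its partial sum has variance of order $N^{H_m \vee 1/2}$ with $H_m < H_G$ for $m > k$ (a direct consequence of $r(n)^m = o(r(n)^k)$).  For the leading term I would use the moving-average (or spectral) representation $Y(n) = \int \psi(n-x)\, B(dx)$ and the key identity $H_k(Y(n)) = \int'_{\mathbb{R}^k} \psi(n-x_1)\cdots \psi(n-x_k)\, B(dx_1)\cdots B(dx_k)$.  Summing, applying the self-similarity scaling $x_j \mapsto N x_j$ inside the multiple integral, and letting $N \to \infty$ produces the kernel $\int_0^t \prod_j (s - x_j)_+^{H - 3/2} ds$ after one recognizes $\psi$ as an asymptotically power-law function dictated by $r(n) \sim n^{2H-2}$.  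The finite-dimensional limit then follows from the $L^2$ continuity of the multiple Wiener--It\^o integral.

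\textbf{Main obstacle.}  The only delicate step is the passage to the limit inside the multiple Wiener--It\^o integral in the non-central case: it requires an $L^2(\mathbb{R}^k)$ convergence of the rescaled kernels, which in turn needs a careful control of $\psi$ via Karamata/Potter-type bounds on the slowly varying factors implicit in $r(n) \sim n^{2H-2}$, together with a uniform domination for dominated convergence.  Everything else --- the tail control in the Hermite expansion and the reduction of the CLT case to Breuer--Major --- is routine once the covariance summability dichotomy at $H = 1 - 1/(2k)$ is established.
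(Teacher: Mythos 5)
The paper itself does not prove Theorem \ref{Thm:nclt gaussian}: it is quoted as a known result of Dobrushin--Major, Taqqu and Breuer--Major, so there is no internal proof to compare against. Your outline is, in substance, the standard proof from those references, and the main steps are correctly identified: the orthogonality relation $\Cov[H_m(Y(i)),H_\ell(Y(j))]=\delta_{m\ell}\, m!\, r(i-j)^m$, the summability dichotomy at $H=1-1/(2k)$ (which also yields the asserted covariance decay $\Cov[G(Y(n)),G(Y(0))]\sim k!\,c_k^2\, r(n)^k\approx n^{2H_G-2}$ in the non-central case), the chaos-by-chaos CLT with uniform $L^2$ tail control, and the reduction to the leading chaos plus kernel rescaling for the non-central limit. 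One point deserves more care than you give it: in the non-central case you pass to a moving-average representation $Y(n)=\int\psi(n-x)\,B(dx)$ and assert that $\psi$ is asymptotically a power law ``dictated by $r(n)\sim n^{2H-2}$''. The covariance asymptotics do not by themselves force $\psi(n)\sim c\,n^{H-3/2}$ --- regular variation of the convolution square does not imply regular variation of the factor --- which is precisely why Dobrushin and Major work with the spectral representation and prove weak convergence of the rescaled random spectral measures, an argument that needs only the hypothesis on $r$. To make your time-domain route rigorous you must either assume the kernel asymptotics directly (as is done in the linear-process literature) or switch to the spectral representation, as you parenthetically allow. With that adjustment, and noting that the ``variance of order $N^{H_m\vee 1/2}$'' for the higher chaoses should read $N^{2(H_m\vee 1/2)}$ (with a possible logarithmic factor at the boundary order, still negligible), the sketch is a correct account of the classical proofs.
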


 \citet{bai:taqqu:2017:instability} point out  that despite  the probabilistic interest of Theorem \ref{Thm:nclt gaussian}, its  straightforward application  to large-sample statistical theories can be problematic. This is due to a   strong instability feature in the notion Hermite rank. This paper can be viewed as a technical companion to \citet{bai:taqqu:2017:instability} containing some mathematical characterization of such instability and related problems. In particular, the paper contains the following results:
\begin{enumerate}
\item The instability of the Hermite rank with respect to transformations including shift, scaling, etc.

\item A ``near higher order rank'' analysis of Theorem \ref{Thm:nclt gaussian} perturbed by a diminishing shift.

\item Coincidence of Hermite rank and power rank (\citet{ho:hsing:1997:limit}) in the Gaussian case.
\end{enumerate}
The third result is not closely related to the first two, but it is obtained as a direct byproduct of the analysis of the instability problems.
The  paper is organized as follows:
the   results described above are stated in Sections  \ref{Sec:affine}, \ref{Sec:near} and \ref{Sec:coin}.    Section \ref{Sec:aux} contain some auxiliary results involving
analytic function theory.
The theorems are proved in Section \ref{Sec:proof}.

Throughout the paper, $a_N\ll b_N$ means as $N\rightarrow\infty$, $a_N/b_N\rightarrow 0$, and $a_N\approx b_N$ means $a_N/b_N\rightarrow c$ for some positive constant $c>0$.
\section{How a transformation affects the Hermite rank}\label{Sec:affine}
First we state the results regarding the instability  of the Hermite rank  under transformation. Their relevance is explained in  Remark \ref{Rem:instab} which follows.  The proofs are given in Section \ref{Sec:proof}.

   Recall that $x$ is an \emph{accumulation point} or \emph{limit point} of a set $E\subset\mathbb{R}^p$,  if every neighborhood of $x$ contains an infinite number of elements of $E$.  The so-called \emph{derived set} $E'$ consists of all the accumulation points of $E$. For example, if $E=\{0\}\cup [1,2)$, then $ E'=[1,2]$.

\begin{Thm}[Instability with respect to  shift]\label{Thm:shift}
Suppose that the measurable function $G(\cdot):\mathbb{R}\rightarrow\mathbb{R}$ is  not a.e.\ constant  and $G(\cdot\pm M)\in L^2(\gamma)$ for some $M>0$. Then $G(\cdot +x)\in L^2$ for all $x\in[-M,M]$, and the  interval $(-M,M)$ contains no accumulation point of the set
 \[
 E=\{x\in (-M,M):  ~ G(\cdot+x) \text{ has Hermite rank   }  \ge 2\},
 \]
namely, $E'\cap (-M,M)=\emptyset$.
\end{Thm}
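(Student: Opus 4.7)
The plan is to reduce the claim to the real-analyticity of the first Hermite coefficient
\[
c_1(x) := \E\bigl[G(Z+x)H_1(Z)\bigr],
\]
since the set $E$ in the statement equals $\{x\in(-M,M): c_1(x)=0\}$: Hermite rank at least $2$ is exactly vanishing of the first coefficient. The preliminary claim $G(\cdot+x)\in L^2(\gamma)$ for $x\in[-M,M]$ follows from the pointwise bound $\phi(u-x)\le \phi(u-M)+\phi(u+M)$ valid for $|x|\le M$, which dominates $\int G(u)^2\phi(u-x)\,du$ by the two finite integrals given by hypothesis.

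Next I would represent $c_1$ as an explicit power series. Using the Hermite expansion $G=\sum_{m\ge 0}c_m H_m$ in $L^2(\gamma)$, together with the identity $\E[H_m(Z+x)H_1(Z)] = m\,x^{m-1}$ (read off from $H_m(Z+x)=\sum_{k=0}^m\binom{m}{k}H_k(Z)x^{m-k}$ and Hermite orthogonality), one formally obtains
\[
c_1(x) \;=\; \sum_{m\ge 1} m\, c_m\, x^{m-1}.
\]
Termwise evaluation is justified by noting that $F\mapsto\E[F(Z+x)H_1(Z)]$ is a bounded linear functional on $L^2(\gamma)$ for every fixed $x\in\mathbb{R}$: the density ratio $\phi(u-x)/\phi(u)=e^{ux-x^2/2}$ lies in $L^2(\gamma)$ as a function of $u$, and Cauchy--Schwarz then bounds the functional. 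Applying it to the $L^2$-convergent partial sums of the Hermite series produces the series representation of $c_1(x)$ at every real $x$; the series therefore has infinite radius of convergence and $c_1$ is entire.

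The identity theorem for power series then yields the dichotomy: either $c_1\equiv 0$ on all of $\mathbb{R}$, forcing $m\,c_m=0$ for all $m\ge 1$ and hence $G\equiv c_0$ in $L^2(\gamma)$ in contradiction with the hypothesis that $G$ is not a.e.\ constant, or the zero set of $c_1$ is discrete in $\mathbb{R}$, which is precisely $E'\cap(-M,M)=\emptyset$. The main subtlety is justifying the passage from the formal Hermite expansion to a genuinely convergent power series for $c_1$; once that bounded-functional argument is carried through, the rest is standard power-series reasoning.
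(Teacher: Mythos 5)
Your argument is correct and reaches the same structural conclusion as the paper (the set $E$ is the zero set of a real-analytic function, so an accumulation point would force that function to vanish identically and hence force $G$ to be a.e.\ constant), but the technical route is genuinely different at both ends. The paper works with the Weierstrass transform $G_\infty(x)=\E G(Z+x)$: it establishes the entire expansion $G_\infty(x)=\sum_m c_m x^m$ via the Hermite generating function and Fubini (Lemma \ref{Lem:analytic}), identifies $G_\infty^{(1)}(x)=\E[G(Z+x)H_1(Z)]$ by differentiating under the integral for $|x|<M$, and then, once $G_\infty$ is shown to be constant, invokes the injectivity of the Weierstrass transform via bilateral Laplace transform uniqueness (Lemma \ref{Lem:Gaussian vanish}) to conclude $G$ is a.e.\ constant. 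You instead expand $c_1(x)=\E[G(Z+x)H_1(Z)]$ directly as the entire series $\sum_{m\ge 1} m c_m x^{m-1}$, using the Appell identity $H_m(z+x)=\sum_k\binom{m}{k}H_k(z)x^{m-k}$ and the fact that $F\mapsto\E[F(Z+x)H_1(Z)]$ is a bounded functional on $L^2(\gamma)$ (since $(v-x)e^{vx-x^2/2}\in L^2(\gamma)$); then $c_1\equiv 0$ forces $mc_m=0$ for all $m\ge 1$ by reading off Taylor coefficients, and completeness of the Hermite basis gives $G=c_0$ a.e. This buys you two things: you never need the Laplace-transform uniqueness theorem, and your series representation of $c_1$ is valid for all real $x$ rather than only on $(-M,M)$; the price is that the termwise-evaluation step must be justified carefully, which you do. (Your series is of course the term-by-term derivative of the paper's series for $G_\infty$, so the two computations are consistent.)

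One small slip: the pointwise bound $\phi(u-x)\le\phi(u-M)+\phi(u+M)$ for $|x|\le M$ is false without constants (take $u=x=0$ and $M$ large, where $\phi(0)>2\phi(M)$). The correct statement, as in Lemma \ref{Lem:shift scale still int}, is $\phi(u-x)\le A\,\phi(u-M)+B\,\phi(u+M)$ for sufficiently large $A,B>0$ depending on $M$, which still yields $G(\cdot+x)\in L^2(\gamma)$ for $|x|\le M$. This does not affect the rest of your argument.
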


\begin{Thm}[Instability with respect to  scaling]\label{Thm:scale}
Suppose that the measurable function $G:\mathbb{R}\rightarrow \mathbb{R}$ is not a.e.\ symmetric  and    $G(\cdot\times M)\in L^2(\gamma)$ for some   $M>1$.    Then $G(\cdot \times y)\in L^2$ for all $y\in (0,M]$, and the interval $(0,M)$ contains no accumulation point of the set
 \[
 E=\{y\in (0,M):  ~ G(\cdot\times y) \text{ has Hermite rank   } \ge  2\},\]
namely,  $E'\cap (0,M)=\emptyset$.
\end{Thm}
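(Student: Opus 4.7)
The plan follows the template of Theorem \ref{Thm:shift}: recast ``Hermite rank $\ge 2$'' as the vanishing of a single function of $y$, show that this function extends holomorphically to an open set strictly containing the relevant real interval, and then combine the identity theorem with uniqueness of the Laplace transform to contradict the asymmetry hypothesis. The first assertion, $G(\cdot\times y)\in L^2(\gamma)$ for $y\in(0,M]$, is immediate: after the change of variable $u=yz$, $\|G(y\cdot)\|_{L^2(\gamma)}^2$ equals $(y\sqrt{2\pi})^{-1}\int G(u)^2 e^{-u^2/(2y^2)}\,du$, and $e^{-u^2/(2y^2)}\le e^{-u^2/(2M^2)}$ pointwise for $y\le M$, so the hypothesis $\int G(u)^2 e^{-u^2/(2M^2)}du<\infty$ does all the work.

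For the structural statement, since $H_0\equiv 1$ and $H_1(x)=x$, the function $G(\cdot\times y)$ has Hermite rank $\ge 2$ precisely when its first Hermite coefficient $c_1(y):=\E[G(yZ)Z]$ vanishes. Another change of variable $u=yz$ gives $c_1(y)=(y^2\sqrt{2\pi})^{-1}g(1/y^2)$ with
\[
g(\lambda)\;:=\;\int_{\mathbb{R}} G(u)\,u\,e^{-\lambda u^2/2}\,du,
\]
so $E$ corresponds, under $y\mapsto 1/y^2$, to the real zero set of $g$ on $(1/M^2,\infty)$. Cauchy--Schwarz using the hypothesis yields, for $\lambda=a+ib$ with $a>1/(2M^2)$,
\[
|g(\lambda)|\;\le\;\Bigl(\int G(u)^2 e^{-u^2/(2M^2)}\,du\Bigr)^{1/2}\Bigl(\int u^2 e^{-(a-1/(2M^2))u^2}\,du\Bigr)^{1/2}<\infty,
\]
with the bound uniform on compact subsets of the half-plane $\Omega:=\{\operatorname{Re}\lambda>1/(2M^2)\}$. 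Since $\lambda\mapsto e^{-\lambda u^2/2}$ is entire, Morera together with Fubini makes $g$ holomorphic on $\Omega$. If $E$ had an accumulation point $y_0\in(0,M)$, then $\lambda_0=1/y_0^2$ would satisfy $\lambda_0>1/M^2>1/(2M^2)$, so $\lambda_0$ is an interior point of $\Omega$ at which zeros of $g$ accumulate, and the identity theorem forces $g\equiv 0$ on $\Omega$.

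Finally, I would turn $g\equiv 0$ into the forbidden symmetry. Splitting the integral at $0$ and substituting $s=u^2$ gives, for every $\lambda\in\Omega$,
\[
g(\lambda)\;=\;\tfrac{1}{2}\int_0^\infty\bigl[G(\sqrt s)-G(-\sqrt s)\bigr]\,e^{-\lambda s/2}\,ds,
\]
an absolutely convergent Laplace transform that vanishes on an entire right half-line. Uniqueness of the Laplace transform then forces $G(\sqrt s)=G(-\sqrt s)$ for a.e.\ $s>0$, i.e., $G$ is a.e.\ symmetric, contradicting the hypothesis. The main bookkeeping obstacle is ensuring that $\Omega$ strictly contains $\{1/y^2:y\in(0,M)\}=(1/M^2,\infty)$, so that accumulation points of $E$ in $(0,M)$ correspond to interior points of $\Omega$ where the identity theorem applies; this is precisely why the tighter Gaussian weight $e^{-u^2/(2M^2)}$ provided by the $L^2$ hypothesis — rather than a weaker one such as $e^{-u^2/M^2}$ — is needed in the Cauchy--Schwarz step. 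Everything else (the $L^2$ claim, the reduction to $c_1$, and Laplace uniqueness) is routine.
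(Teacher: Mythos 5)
Your proposal is correct and follows essentially the same route as the paper: both reduce membership in $E$ to the vanishing of the first Hermite coefficient $\E[G(yZ)Z]$, rewrite it via the substitution $s=u^2$ as the Laplace transform $\tfrac12\int_0^\infty[G(\sqrt{s})-G(-\sqrt{s})]e^{-\lambda s/2}\,ds$ evaluated at $\lambda=1/y^2$, invoke analyticity plus the identity theorem to force this transform to vanish identically, and conclude $G$ is a.e.\ symmetric by Laplace uniqueness, contradicting the hypothesis. The only cosmetic difference is that you establish holomorphy on the half-plane $\{\operatorname{Re}\lambda>1/(2M^2)\}$ directly via Morera and Fubini, whereas the paper cites the real analyticity of the Laplace transform from Widder (its Lemma 5.5).
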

\begin{Rem}\label{Rem:sym}
The requirement of $G$ being non-symmetric in Theorem \ref{Thm:scale} is essential because any non-zero symmetric function $f\in L^2(\gamma)$ has Hermite rank  $2$ since due to the symmetry of the measure $\gamma$
\[
\int_{\mathbb{R}} f(z)H_1(z) \gamma(dz)=\int_{\mathbb{R}} f(z)z \gamma(dz)=0,
\] while
\[
\int_{\mathbb{R}} f(z) (z^2-1) \gamma(dz)=
\int_{\mathbb{R}} f(z) H_2(z) \gamma(dz)=2\int_0^\infty f(z)z^2 \gamma(dz)>0
\] due to symmetry of $\gamma$.
\end{Rem}
\begin{Rem}\label{Rem:instab}
The preceding two theorems point to the \emph{instability of the Hermite rank}. They imply, for example, that the particular values $x=0$ in Theorem \ref{Thm:shift} or $y=1$ in Theorem \ref{Thm:scale} cannot be an accumulation point of the set $E$ with  Hermite rank greater or equal to $2$. Hence some neighborhood of $x=0$ or $y=1$ contains      at most one point ($x=0$ or $y=1$) with Hermite  rank $k\ge 2$. This means that a slight  level shift or scale change (when the original transformation is non-symmetric) will force the   rank to change from perhaps being $\ge 2$  to being  $1$. In the case of shift, this is stated as Theorem 2.14 in \citet{bai:taqqu:2017:instability}. Note that the rank $2$ of a non-constant symmetric function in Remark \ref{Rem:sym}  is still unstable with respect to a level shift by Theorem \ref{Thm:shift}.
\end{Rem}

In addition, one can consider the       shift and scaling  joint together, namely, deal with an affine transformation.
\begin{Thm}[Instability with respect to  affine transformation]\label{Thm:joint}
Suppose that the measurable $G(\cdot)$ is not constant a.e. and $G(\pm M_1+\cdot\times M_2)\in L^2(\gamma)$. Then the  set \[
E=\{(x,y)\in (-M_1,M_1)\times (0,M_2):  ~ G(x+ \cdot \times y ) \text{ has Hermite rank   }\ge   2\}\]
has Hausdorff dimension (see \citet{falconer:2004:fractal}, Section 2.2) not exceeding $1$. So in particular,  $E$ has $0$ two-dimensional Lebesgue measure.
\end{Thm}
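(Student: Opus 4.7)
The plan is to exhibit $E$ as contained in the zero set of a non-trivial real-analytic function on the open rectangle $R = (-M_1,M_1) \times (0,M_2)$, and then invoke the standard dimension bound for such zero sets. Since the Hermite rank of $G(x + y \cdot) \in L^2(\gamma)$ is at least $2$ exactly when its first-order Hermite coefficient vanishes, introduce
\[
\Phi(x,y) = \E\bigl[G(x+yZ)\, Z\bigr] = \int_{\mathbb{R}} G(x+yz)\, z\, \phi(z)\, dz, \qquad (x,y) \in R.
\]
On $R$ the function $G(x+y\cdot)$ is non-constant (as $G$ itself is non-constant and $y>0$), so $(x,y) \in E$ is equivalent to $\Phi(x,y)=0$. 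A direct Gaussian tail comparison, using $|x|<M_1$, $y<M_2$, and the $L^2(\gamma)$-assumption on $G(\pm M_1 + M_2\,\cdot)$, shows that this integral is absolutely convergent throughout $R$, exactly as in Theorems \ref{Thm:shift}--\ref{Thm:scale}.

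Next, I would verify that $\Phi$ is real-analytic on $R$. Here I would appeal to the analytic-function-theoretic machinery in Section \ref{Sec:aux}: after the substitution $u = x+yz$ one gets
\[
\Phi(x,y) = \frac{1}{\sqrt{2\pi}\, y^2}\int_{\mathbb{R}} G(u)(u-x)\, \exp\!\bigl(-(u-x)^2/(2y^2)\bigr)\, du,
\]
and for $(x,y)$ in a small bi-disc around any point of $R$ the integrand is jointly holomorphic in $(x,y)$ for each fixed $u \in \mathbb{R}$, the real part of $(u-x)^2/y^2$ remaining dominant at infinity when the imaginary parts are kept sufficiently small. The preceding tail bound, combined with dominated convergence and Morera's theorem, then upgrades pointwise holomorphy of the integrand to joint holomorphy of $\Phi$ in a complex neighborhood of $R$; in particular $\Phi$ is real-analytic on $R$.

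The third step is to rule out $\Phi \equiv 0$. Suppose for contradiction that $\Phi$ vanished identically on $R$. Then for any fixed $y_0 \in (0,M_2)$, the map $x \mapsto \Phi(x,y_0)$ would be identically zero on $(-M_1,M_1)$, which says that the rescaled function $G_{y_0}(w) := G(y_0 w)$ has Hermite rank $\ge 2$ after every shift $x$ in a neighborhood of the origin. But $G_{y_0}$ still satisfies an $L^2(\gamma)$ shifted hypothesis of Theorem \ref{Thm:shift}, inherited from the one on $G$ because $y_0<M_2$ (again by the same tail comparison), and $G_{y_0}$ is non-constant. Theorem \ref{Thm:shift} therefore forces the corresponding shift-set to have no accumulation point in a neighborhood of $0$, contradicting our supposition.

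Finally, $\Phi$ is a non-trivial real-analytic function on the connected open set $R\subset\mathbb{R}^2$, so $\{\Phi=0\}$ is a proper real-analytic subvariety of $R$; by the \L ojasiewicz structure theorem (or equivalently, by Weierstrass preparation locally representing $\{\Phi=0\}$ as a finite union of real-analytic graphs), its Hausdorff dimension is at most $1$. Since $E \subseteq \{\Phi=0\}$, the conclusion follows. The step I expect to be the main obstacle is justifying the joint holomorphic extension of $\Phi$ in both $(x,y)$: the one-variable versions underlie Theorems \ref{Thm:shift} and \ref{Thm:scale}, but the bivariate version requires simultaneous complex control of the Gaussian kernel in both arguments together with uniform tail bounds on bi-discs, which is precisely the kind of statement the auxiliary section seems designed to provide.
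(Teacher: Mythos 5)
Your proposal follows essentially the same route as the paper: reduce $E$ to the zero set of the bivariate first Hermite coefficient $\Phi(x,y)=\E[G(x+yZ)Z]$, establish its joint real analyticity by the change of variables plus a Morera-type argument (this is exactly what Lemma~\ref{Lem:biv ana} supplies), rule out $\Phi\equiv 0$ by restricting to a horizontal slice and invoking Theorem~\ref{Thm:shift}, and conclude via the dimension bound for real-analytic varieties. The only (harmless) deviation is that you slice at an arbitrary $y_0\in(0,M_2)$ and apply Theorem~\ref{Thm:shift} to the rescaled function, whereas the paper fixes the slice $y=1$.
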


To illustrate Theorem \ref{Thm:joint}, consider the function $G(z)=z^2=1+H_2(z)$ which has Hermite rank $2$. Suppose that it is perturbed by an affine transformation involving a shift $x$ and a scale $y$ and becomes
\[(x+zy)^2=x^2+2xyz+y^2z^2= x^2+y^2 + (2xy)z+y^2(z^2-1)=x^2+y^2 + (2xy)H_1(z)+y^2H_2(z).
\]
After centering,    the centered function in $z$ becomes $(2xy)H_1(z)+y^2H_2(z)$, which has Hermite rank $1$  if and only if $x\neq 0$ (it no longer has Hermite rank $2$ as $G(z)$ does) . The  set $E$ in Theorem \ref{Thm:joint} corresponds to the one-dimensional line $\{(x,y):~x=0\}$ which has Lebesgue measure zero.

 Finally, we can formulate an abstract result about the instability with respect to  general nonlinear transformations.
\begin{Thm}\label{Thm:nonlinear per}
Suppose that $G\in L^2(\gamma)$  and let $F_\theta$ be a family of transformations parameterized by  $\theta\in D\subset \mathbb{R}^p$, where $D$ is an open region containing the origin $\mathbf{0}=(0,\ldots,0)$ and let $F_\mathbf{0}$ be the identity transformation. Suppose that $G$ is perturbed and becomes $G\circ F_\theta$.

 Assume that  $G\circ F_\theta\in L^2(\phi)$ for all $\theta\in D$, $Z$ is a standard normal random variable, and
let
\[
U(\theta):= \E [  (G\circ F_\theta)(Z)\times H_1( Z)]=\E [ (G\circ F_\theta)(Z)\times Z  ].
\]
be the first coefficient of the Hermite expansion of $G\circ F_\theta$, and
  assume that the following properties hold:
\begin{enumerate}[(a)]
\item  $U(\theta)$ is real analytic in $\theta\in D$;
\item   If $U(\theta)=0$ holds for all $\theta\in D$, then $G$ is   constant a.e..
\end{enumerate}
 Then either $G$ is constant a.e., or if not, then the set
 \[E=\{\theta\in D: ~G\circ F_\theta \text{ has Hermite rank }\ge 2\}=\{\theta\in D:  ~U(\theta)=0\}\] has Hausdorff dimension  not exceeding $p-1$. In particular,  $E$ has zero $p$-dimensional Lebesgue measure.
\end{Thm}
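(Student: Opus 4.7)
The plan is to reduce the claim to a well-known property of the zero set of a real analytic function, after unpacking the definition of the Hermite rank. The probabilistic input is essentially exhausted by hypotheses (a) and (b); what remains is a short packaging that invokes the auxiliary results of Section \ref{Sec:aux}.

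First I would rewrite the set $E$ explicitly. By Definition \ref{Def:Herm rank}, a function $f\in L^2(\gamma)$ has Hermite rank at least $2$ precisely when its first Hermite coefficient vanishes, and since $H_1(z)=z$ that coefficient for $f=G\circ F_\theta$ is exactly $U(\theta)$. Hence the two descriptions of $E$ in the statement agree automatically, and the problem reduces to bounding the Hausdorff dimension of $U^{-1}(\{0\})\subset D$.

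Next I would exploit the dichotomy provided by (b). If $U\equiv 0$ on $D$, then by hypothesis (b) we are in the first alternative and $G$ is a.e.\ constant. Otherwise, by hypothesis (a), $U$ is real analytic on the connected open region $D\subset \mathbb{R}^p$ and is not identically zero. It is a standard fact from real analytic geometry that the zero set of such a function is a proper real analytic subvariety of $D$ and therefore has Hausdorff dimension at most $p-1$; this is precisely the sort of statement that Section \ref{Sec:aux} is set up to provide, and I would simply invoke it here. The vanishing of the $p$-dimensional Lebesgue measure of $E$ then follows automatically, since any subset of $\mathbb{R}^p$ with Hausdorff dimension strictly less than $p$ is Lebesgue-null.

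The main obstacle is not in the present theorem, which is a one-page packaging once the tools are in hand, but in its use and in its supporting lemma. On the one hand, the auxiliary analytic-geometric fact on zero sets must be stated and proved in Section \ref{Sec:aux}. On the other hand, the substance in the concrete applications (Theorems \ref{Thm:shift}, \ref{Thm:scale}, \ref{Thm:joint}) is the verification of (a) and especially (b) for specific families $F_\theta$ such as shifts and scalings; proving (b) there amounts to showing that a whole family of perturbed first Hermite coefficients of $G$ determines $G$ up to a.e.\ equality, and that is where the special structure of the transformation must be used.
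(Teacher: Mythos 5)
Your proposal is correct and follows essentially the same route as the paper: the paper's proof is exactly the two-line dichotomy you describe (if $G$ is not a.e.\ constant, then by (b) the analytic $U$ is not identically zero, and the zero set $E=\{\theta: U(\theta)=0\}$ has the claimed Hausdorff-dimension and Lebesgue-null properties by the standard result on zero sets of real analytic functions, cited there as Mityagin's note together with Lojasiewicz's structure theorem). Your additional remarks about where the real work lies (verifying (a) and (b) for concrete families, as in Theorems \ref{Thm:shift}--\ref{Thm:joint}) accurately reflect the structure of the paper.
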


\begin{Rem}
Here is a more informative description of the set $E$ in   Theorem \ref{Thm:joint} and \ref{Thm:nonlinear per} above.  First, $E$ is called the zero set of $U$ if it consists of all $\theta\in D$ such that $U(\theta)=0$. Second,       the zero set $E=\{\theta\in D:~U(\theta)=0\}$  of a (multivariate) real analytic function $U$ is called a real analytic variety. According to Lojasiewicz's Structure Theorem (Theorem 6.3.3 of \citet{krantz:parks:2002:primer}),  $E\subset \mathbb{R}^p$ can be expressed as a union of   submanifolds of dimensions $0,1,\ldots,p-1$. We state here the results in terms of the Hausdorff dimension   for simplicity.
\end{Rem}

\section{Being near a higher order Hermite rank}\label{Sec:near}
 In the so-called near integration analysis of unit root (e.g., \citet{phillips:1987:towards}), one studies the asymptotic behavior of an autoregressive model when the autoregression coefficient  tends to $1$ (unit root) as the sample size increases. Such analysis sheds lights on the situation where the coefficient is close to but not exactly $1$.
Here in a similar spirit, we carry out a ``near higher order rank''  analysis  for Theorem \ref{Thm:shift} regarding the limit theorem for the sum $\sum_{n=1}^N G(Y(n)+x_N)$, where the shift $x_N\rightarrow 0$  as the number of summands $N\rightarrow\infty$. The result is given in the following theorem,  where  $c,c_i$'s denote   constants whose value can change from line to line.
\begin{Thm}\label{Thm:near higher order}
Let $\{Y(n)\}$ be a  standardized stationary Gaussian process whose covariance satisfies (\ref{eq:cov LRD}) with Hurst index $H\in (1/2,1)$. Suppose that the function $G(\cdot)$ is not constant a.e.,     $G(\cdot\pm M)\in L^2(\gamma)$ for some $M>0$, and has an Hermite rank $k\ge 1$.  Set
\begin{equation}\label{e:s1}
S_N(x_N)=\sum_{n=1}^N\Big[G(Y(n)+x_N)-\E G(Y(n)+x_N)\Big],
\end{equation}
where $x_N\rightarrow 0$.

\noindent $\bullet$  Central limit case: suppose that $1/2<H<1-{1\over {2k}} $ (if $k=1$ this case does not exist).
Then as $N\rightarrow\infty$, $N^{(1/2-H)/(k-1)}\rightarrow 0$ and
\begin{flalign*}
    \begin{array}{L@{\quad}l@{{}\ConvFDD{}}l}
    (a)    if  $ x_N\ll N^{(1/2-H)/(k-1)}$:   &  N^{-1/2}S_{[Nt]}(x_N)   &  c B(t);   \\
     (b)  if  $x_N\approx N^{(1/2-H)/(k-1)}$:        &N^{-1/2}S_{[Nt]}(x_N)   & c_1 Z_{H,1} (t) + c_2 B (t); \\
     (c)  if  $ N^{(1/2-H)/(k-1)}\ll x_N \rightarrow 0$: & N^{-H} x_N^{1-k} S_{[Nt]}(x_N)   &  c  Z_{H,1} (t);
    \end{array}
\end{flalign*}
where the fractional Brownian motion $Z_{H,1}(t)$ and the Brownian motion $B(t)$ are independent.

\medskip
\noindent $\bullet$  Non-central limit case:  suppose that $1-{1\over {2k}}<H<1$.
 Let $H_G$ be
as in (\ref{eq:H_G general}).
Then as $N\rightarrow\infty$, $N^{H-1}\rightarrow 0$, and
\begin{flalign*}
 \begin{array}{L@{\quad}l@{{}\ConvFDD{}}l}
    (a)    if  $ 0<x_N\ll N^{H-1}$:   &  N^{-H_G}S_{[Nt]}(x_N) & c Z_{H,k} (t)  ;   \\
     (b)   if  $x_N\approx N ^{H-1}$: & N^{-H_G}S_{[Nt]}(x_N)&\sum_{m=1}^k  c_m Z_{H,m} (t); \\
     (c)  if  $N^{H-1}\ll x_N\rightarrow 0$:& N^{-H} x_N^{1-k} S_{[Nt]}(x_N)  & c  Z_{H,1} (t);
    \end{array}
\end{flalign*}
where the Hermite processes $Z_{H,m}(t)$'s  are defined through the same Brownian motion in (\ref{eq:Herm process}).
\end{Thm}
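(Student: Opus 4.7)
The strategy is to Hermite-expand $G(\cdot+x_N)$ in $L^2(\gamma)$ and track which levels of the expansion dominate in each regime. Using the translation identity $H_m(y+x)=\sum_{j=0}^m \binom{m}{j}x^{m-j}H_j(y)$ together with the hypothesis $G(\cdot\pm M)\in L^2(\gamma)$---which, via a dominated convergence argument, keeps $G(\cdot+x)\in L^2(\gamma)$ with a continuously varying norm for $|x|\le M$---one obtains the centered expansion
\[
G(y+x_N)-\E G(Y+x_N)=\sum_{j\ge 1}d_j(x_N)H_j(y),\qquad d_j(x_N)=\sum_{m\ge\max(j,k)}c_m\binom{m}{j}x_N^{m-j}.
\]
As $x_N\to 0$, $d_j(x_N)=c_k\binom{k}{j}x_N^{k-j}(1+o(1))$ for $1\le j\le k$, while $d_j(x_N)\to c_j$ for $j\ge k$. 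Setting $T_j(N,t):=\sum_{n=1}^{[Nt]}H_j(Y(n))$, the working decomposition is $S_N(x_N,t)=\sum_{j\ge 1}d_j(x_N)T_j(N,t)$.

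I would then compare the magnitudes $|d_j(x_N)|\cdot\mathrm{rate}(T_j(N))$: recall $T_1(N)/N^H\ConvFDD Z_{H,1}$ always, and for $j\ge 2$, $T_j(N)$ has rate $N^{H_j}$ when $H>1-\tfrac{1}{2j}$ (NCLT) and $N^{1/2}$ when $H<1-\tfrac{1}{2j}$ (CLT). In the central limit case ($H<1-\tfrac{1}{2k}$) the competitors are the first-chaos contribution $x_N^{k-1}N^H$ and the $j\ge k$ CLT contribution $N^{1/2}$; their equality gives the critical scale $x_N\approx N^{(1/2-H)/(k-1)}$. Intermediate levels $2\le j<k$ are always subdominant, which reduces to the algebraic inequality $(j-1)(\tfrac12-hk)<0$ with $h:=1-H>\tfrac{1}{2k}$. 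In the non-central case ($H>1-\tfrac{1}{2k}$) all $1\le j\le k$ are NCLT and contribute as $x_N^{k-j}N^{H_j}$; equating successive levels gives the critical scale $x_N\approx N^{H-1}$, at which all of $Z_{H,1},\ldots,Z_{H,k}$ appear simultaneously. Each of the six subcases (a)--(c) is read off by identifying the dominant term(s) at the given $x_N$-scaling.

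For the joint convergence, each $T_j$ is a multiple Wiener-It\^o integral driven by the common Brownian motion $B$ appearing in (\ref{eq:Herm process}), so in non-central case (b) the Hermite processes $Z_{H,1},\ldots,Z_{H,k}$ are built from that same $B$, yielding the single-sum representation $\sum_m c_m Z_{H,m}$. In central case (b), the Hermite orthogonality $\E[H_j(Y(n))H_{j'}(Y(n'))]=\mathbf{1}_{j=j'}\,j!\,\Cov[Y(n),Y(n')]^j$ makes $T_1$ orthogonal to every $T_j$, $j\ge 2$, at each finite $N$; the first-chaos limit $Z_{H,1}$ is therefore orthogonal to, and---being jointly Gaussian with---independent of, the CLT limit $B$ coming from chaoses of order $\ge k\ge 2$. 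The main technical obstacle is uniform-in-$x_N$ tail control: after truncating the Hermite expansion at a large $K$, one needs the remainder $\sum_{j>K}d_j(x_N)T_j(N)$ to vanish uniformly for $|x_N|\le M$. Orthogonality and the uniform bound $\sum_n|\Cov[Y(n),Y(0)]|^j\le C$ for $j$ large give $\Var\bigl(\sum_{j>K}d_j(x_N)T_j(N)\bigr)\le CN\sum_{j>K}j!\,d_j(x_N)^2$; since the map $x\mapsto\sum_j j!\,d_j(x)^2=\|G(\cdot+x)-\E G(Y+x)\|^2_{L^2(\gamma)}$ is continuous on $[-M,M]$, Dini's theorem upgrades the monotone pointwise partial-sum convergence to uniform convergence, delivering the required uniform remainder estimate.
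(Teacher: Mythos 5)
Your proposal is correct and follows essentially the same route as the paper: the same chaos-level decomposition with coefficient asymptotics $d_j(x_N)\sim c_k\binom{k}{j}x_N^{k-j}$ (the paper obtains these from the analyticity of the Weierstrass transform $G_\infty$, Lemma \ref{Lem:analytic}, rather than from the Hermite translation identity), the same order comparison yielding the critical scales $N^{(1/2-H)/(k-1)}$ and $N^{H-1}$, the joint convergence imported from \citet{bai:taqqu:2013:1-multivariate}, and the same key technical step of replacing $d_j(x_N)$ by $c_j$ in an $L^2$ sense using continuity of $x\mapsto \E G(Z+x)^2$ to control the chaos tail uniformly (your Dini argument and the paper's add-and-subtract of $F_\infty$ accomplish the same thing). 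The only points to tighten are that the asymptotic independence of $Z_{H,1}$ and $B$ in central case (b) requires the joint (not just marginal) Gaussian convergence of the chaos vector, i.e.\ a Peccati--Tudor/Bai--Taqqu type multivariate theorem rather than orthogonality alone, and that the tail term is not discarded but its coefficients are replaced by their limits before invoking the truncated joint convergence.
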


The terms "central limit case" and "non-central limit case" in the preceding theorem refer to the
 terminology used in Theorem \ref{Thm:nclt gaussian} and thus to the type of limits one obtains when there is no shift, that is when $x_N=0$.
\begin{Rem}
Theorem \ref {Thm:near higher order} has some interesting implications.
In the central limit case,     the critical order $N^{(1/2-H)/(k-1)}$  depends negatively on $H$. The larger $H$, the smaller the critical order. Thus the larger $H$ (but below $1-1/(2k)$) is, the more easily the effect of a higher-order rank   in the limit theorem gets surpassed by a shift perturbation. In  the non-central limit case, however, the relation is reversed since the critical order $N^{H-1}$ depends positively on $H$. Note that in the non-central limit case,  the order $N^{H-1}$ determining the border of the regimes  does not depend on the rank $k$
\end{Rem}

In Theorem \ref{Thm:near higher order non-center} below, we  provide a result  on  the \emph{non-centered} sum $\sum_{n=1}^N  G(Y(n)+x_N)$, assuming that $\E G(Y(n))=0$. It is expected that if $x_N$ tends to $0$ too slowly, then a deterministic trend will appear in the limit.

\begin{Thm}\label{Thm:near higher order non-center}
Under the assumptions of Theorem \ref{Thm:near higher order}, if in addition $\E G(Y(n))=0$ and $S_N$ is replaced by
\begin{equation}\label{e:s2}
\widetilde{S}_N(x_N)=\sum_{n=1}^N  G(Y(n)+x_N),
\end{equation}
then the following conclusions hold:

\noindent $\bullet$  Central limit case:    suppose that  $H<1-{1\over {2k}} $.
Then as $N\rightarrow\infty$:
\begin{flalign*}
 \begin{array}{L@{\quad}l@{{}\ConvFDD{}}l}
    (a) if $x_N\ll N^{-1/(2k)}$: & N^{-1/2}\widetilde{S}_{[Nt]}(x_N) & c B(t)\\
 (b) if $x_N\approx N^{1-1/(2k)}$:& N^{-1/2}\widetilde{S}_{[Nt]}(x_N)& c_1t+ c_2 B(t);   \\
     (c) if  $N^{1-1/(2k)}\ll x_N\rightarrow 0$: & N^{-1}x_N^{-k}\widetilde{S}_{[Nt]}(x_N)& ct ;
    \end{array}
\end{flalign*}
where $Z_{H,k}(t)$ and $B(t)$ are independent.

\medskip
\noindent $\bullet$  Non-central limit case:    suppose that  $H>1-{1\over {2k}}$.
 Let $H_G$ be
as in (\ref{eq:H_G general}).
Then as $N\rightarrow\infty$:
\begin{flalign*}
 \begin{array}{L@{\quad}l@{{}\ConvFDD{}}l}
    (a) if  $0<x_N\ll N^{H-1}$:  &N^{-H_G}\widetilde{S}_{[Nt]}(x_N) & c Z_{H,k} (t)  ;\\
 (b) if  $x_N\approx N ^{H-1}$: & N^{-H_G}\widetilde{S}_{[Nt]}(x_N) & c_0t+   \sum_{m=1}^k  c_m Z_{H,m} (t)  ;   \\
     (c) if  $N^{H-1}\ll x_N\rightarrow 0$:   & N^{-1} x_N^{-k} \widetilde{S}_{[Nt]}(x_N) & ct  ;
    \end{array}
\end{flalign*}
 where the Hermite processes $Z_{H,m}(t)$'s  are defined through the same Brownian motion in (\ref{eq:Herm process}).
\end{Thm}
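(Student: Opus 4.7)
The plan is to reduce Theorem \ref{Thm:near higher order non-center} to Theorem \ref{Thm:near higher order} by isolating a deterministic drift. Indeed,
\[
\widetilde{S}_{[Nt]}(x_N)=S_{[Nt]}(x_N)+[Nt]\,\mu(x_N),\qquad \mu(x):=\E G(Z+x),
\]
where the stochastic piece $S_{[Nt]}(x_N)$ is already controlled by Theorem \ref{Thm:near higher order}. What remains is to pin down the asymptotic size of the deterministic drift $[Nt]\,\mu(x_N)$ and, in each regime, compare it with the stochastic normalization.

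For the drift, I would first obtain an analytic formula for $\mu$. Starting from the generating function $\sum_n t^n H_n(z+x)/n!=e^{t(z+x)-t^2/2}$, one gets the translation identity $H_n(z+x)=\sum_{j=0}^n\binom{n}{j}x^{n-j}H_j(z)$; taking $z=Z$ standard normal only the $j=0$ term survives, so $\E H_n(Z+x)=x^n$. Inserting the Hermite expansion $G=\sum_{n\ge k}c_n H_n$ (the constant coefficient vanishes because $\E G(Z)=0$), and justifying the interchange of sum and expectation via Cauchy--Schwarz and $L^2(\gamma)$ convergence of the partial sums (legitimate since $G\in L^2(\gamma)$, a consequence of the assumption $G(\cdot\pm M)\in L^2(\gamma)$), I obtain
\[
\mu(x)=\sum_{n\ge k}c_n x^n,
\]
so that $\mu(x_N)=c_k x_N^k(1+o(1))$ as $x_N\to 0$; here $c_k\ne 0$ by definition of the Hermite rank. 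The drift at time $[Nt]$ is therefore of order $N x_N^k$.

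The three regimes are then identified by comparing $N x_N^k$ with the stochastic normalization. In the non-central case, equating $N x_N^k$ with $N^{H_G}=N^{k(H-1)+1}$ gives the critical $x_N\asymp N^{H-1}$, which coincides exactly with the threshold of Theorem \ref{Thm:near higher order}; hence regimes (a)--(c) of the two theorems are in one-to-one correspondence. In the central case, equating $N x_N^k$ with $N^{1/2}$ yields the critical $x_N\asymp N^{-1/(2k)}$; one checks that, under $H<1-1/(2k)$, this lies below the stochastic threshold $N^{(1/2-H)/(k-1)}$ of Theorem \ref{Thm:near higher order}, so the stochastic part lies in the CLT regime (a) of Theorem \ref{Thm:near higher order} throughout (a) and (b). In each of the three regimes one concludes by: (a) showing the drift is negligible under the stochastic normalization; (b) applying Slutsky (the drift is deterministic) to add the term $c_0 t$ on top of the stochastic limit inherited from Theorem \ref{Thm:near higher order}; (c) renormalizing by $N^{-1}x_N^{-k}$, under which the drift tends to $ct$ while the stochastic part, of order at most $\max(N^{1/2},N^H x_N^{k-1})$ by Theorem \ref{Thm:near higher order}, becomes negligible (both $N^{-1/2}x_N^{-k}\to 0$ and $N^{H-1}/x_N\to 0$ hold in regime (c)). The main obstacle is the analytic step producing the power series for $\mu$; once that is in hand, the rest of the argument is bookkeeping comparing polynomial orders in $N$ and $x_N$.
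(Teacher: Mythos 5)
Your decomposition $\widetilde S_{[Nt]}(x_N)=S_{[Nt]}(x_N)+[Nt]\,G_\infty(x_N)$ with $G_\infty(x_N)=c_k x_N^k(1+o(1))$ (your power series for $\mu$ is exactly the paper's expansion (\ref{eq:ana expan G_infty}) from Lemma \ref{Lem:analytic}), followed by comparing the drift order $Nx_N^k$ against the stochastic orders at the critical thresholds $N^{-1/(2k)}$ and $N^{H-1}$, is precisely the paper's own argument, which adds the drift term $Z_N(t)\sim \frac{g_k}{k!}x_N^k[Nt]$ to the decomposition used in the proof of Theorem \ref{Thm:near higher order} and performs the same bookkeeping (note the paper's stated thresholds $N^{1-1/(2k)}$ in the central-case regimes (b)--(c) are typos for $N^{-1/(2k)}$, as your analysis and the paper's proof both confirm). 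The proposal is correct and takes essentially the same route as the paper.
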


In the next theorem the argument $x_N$ in Theorem \ref{Thm:near higher order non-center} is replaced by a subtracted sample mean.
\begin{Thm}\label{Thm:near higher order centering}
Under the assumptions of Theorem \ref{Thm:near higher order non-center},   assume in addition that $G(\cdot)$ is a polynomial. Let $\bar{Y}_N=N^{-1}\sum_{n=1}^N Y(n)$. Then the following conclusions hold:

\noindent $\bullet$  Central limit case:    suppose that  $H<1-{1\over {2k}} $. Then as $N\rightarrow\infty$,
\[
N^{-1/2} \widetilde{S}_{[Nt]}(-\bar{Y}_N)\ConvFDD cB(t)
\]
\medskip
\noindent $\bullet$  Non-central limit case:    suppose that  $H>1-{1\over {2k}}$.
 Let $H_G$ be
as in (\ref{eq:H_G general}).
Then as $N\rightarrow\infty$,
\[
N^{-H_G}\widetilde{S}_{[Nt]}(-\bar{Y}_N)\ConvFDD  \sum_{m=1}^k c_m Z_{H,m}(t)
\]
where   the Hermite processes $Z_{H,m}(t)$ 's are defined through the same Brownian motion in (\ref{eq:Herm process}).
\end{Thm}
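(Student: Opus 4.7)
The plan is to expand $\widetilde S_{[Nt]}(-\bar Y_N)$ via the Hermite polynomial translation formula, reduce the analysis to joint convergence of normalized sums of Hermite polynomials in the base Gaussian process, and then apply continuous mapping. Since $G$ is a polynomial with $\E G(Y(n))=0$, write $G=\sum_{m=k}^K a_m H_m$ with $a_k\ne 0$. The generating-function identity $H_m(y-x)=\sum_{j=0}^m \binom{m}{j}H_{m-j}(y)(-x)^j$ then yields
\[
\widetilde S_{[Nt]}(-\bar Y_N) = \sum_{m=k}^K a_m \sum_{j=0}^m \binom{m}{j}(-\bar Y_N)^j\, T_{N,m-j}(t),
\]
where $T_{N,\ell}(t):=\sum_{n=1}^{[Nt]} H_\ell(Y(n))$ and $T_{N,0}(t):=[Nt]$. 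The crucial observation is that $\bar Y_N=T_{N,1}(1)/N$, which ties the random shift directly to the first-order Hermite sum.

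In the non-central case ($H>1-1/(2k)$) one has $\bar Y_N=O_p(N^{H-1})$ and, for every $\ell\le k$, $T_{N,\ell}(t)=O_p(N^{1-\ell(1-H)})$, so each summand $(-\bar Y_N)^j T_{N,m-j}(t)$ has exact order $N^{1-m(1-H)}$. Only the block $m=k$ matches the target normalization $N^{H_G}$, while the $m>k$ blocks are $o_p(N^{H_G})$ thanks to second-moment bounds for Hermite sums combined with hypercontractivity in fixed Wiener chaos. For the $m=k$ block one invokes the joint FDD convergence
\[
\bigl(N^{-H_\ell}T_{N,\ell}(t)\bigr)_{\ell=1}^{k} \ConvFDD \bigl(\alpha_\ell Z_{H,\ell}(t)\bigr)_{\ell=1}^{k}
\]
with all Hermite processes driven by the common Brownian motion of (\ref{eq:Herm process}) (see \citet{pipiras:taqqu:2017:long}), together with the induced convergence $N^{1-H}\bar Y_N\to\alpha_1 Z_{H,1}(1)$. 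Continuous mapping applied to the polynomial combination of this joint limit identifies $N^{-H_G}\widetilde S_{[Nt]}(-\bar Y_N)$ as a specific random element of the joint chaos generated by that Brownian motion, which is then reorganized into the asserted form $\sum_{m=1}^k c_m Z_{H,m}(t)$. The central case ($H<1-1/(2k)$) is handled by reduction to Theorem~\ref{Thm:near higher order non-center}(a): the inequality forces $\bar Y_N=O_p(N^{H-1})=o_p(N^{-1/(2k)})$, and the same order analysis as above shows that every $j\ge 1$ term in the expansion is $o_p(N^{1/2})$; the residual $j=0$ contribution $\sum_m a_m T_{N,m}(t)/\sqrt N$ then converges to $cB(t)$ by the Breuer--Major CLT applied to $G$.

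The main obstacle is the joint FDD convergence with a common underlying Brownian motion, which is essential so that the cross-products $(N^{1-H}\bar Y_N)^{j}\cdot N^{-H_{k-j}}T_{N,k-j}(t)$ produced by the expansion can be evaluated consistently inside the continuous-mapping step and accumulated into coefficients against $Z_{H,m}(t)$. A secondary technical point is the uniform (over a finite $t$-grid) control of the higher-order $m>k$ remainders in the polynomial expansion, which requires explicit covariance bounds for the Hermite sums $T_{N,\ell}$ of each order $\ell$, combined with the hypercontractivity of fixed-order Wiener chaos to convert $L^2$ bounds into probability bounds.
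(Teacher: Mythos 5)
Your decomposition is the paper's own argument made explicit for polynomials: the Appell identity $H_m(y-x)=\sum_{j}\binom{m}{j}H_{m-j}(y)(-x)^j$ is precisely the Taylor expansion of the Weierstrass transform $G_\infty$ that produces (\ref{eq:sum no center}), the observation $\bar Y_N=N^{-1}T_{N,1}(1)=O_p(N^{H-1})$ is the paper's stated ``key,'' your block-by-block order comparison matches the paper's comparison of $Z_N,A_N,B_N,C_N,D_N$, and the final step is the same appeal to (\ref{eq:CLT joint}) and (\ref{eq:NCLT joint}) plus Slutsky and continuous mapping. The central limit case and the negligibility of the $m>k$ blocks are handled correctly (hypercontractivity is not even needed: the $O_p$ bounds for $(-\bar Y_N)^j$ and for $T_{N,m-j}$ simply multiply), and the finiteness of the Hermite expansion of a polynomial $G$ plays exactly the role the paper assigns to it, namely making the infinite-tail approximation (\ref{eq:approx clt}) unnecessary.

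The one step that does not go through as written is your last sentence in the non-central case. Continuous mapping applied to the $m=k$ block yields
\[
N^{-H_G}\widetilde S_{[Nt]}(-\bar Y_N)\ \ConvFDD\ a_k\sum_{j=0}^{k}\binom{k}{j}\bigl(-\alpha_1 Z_{H,1}(1)\bigr)^{j}\,\alpha_{k-j}\,Z_{H,k-j}(t),
\]
with the convention $\alpha_0 Z_{H,0}(t)=t$. The coefficients $\bigl(-\alpha_1 Z_{H,1}(1)\bigr)^{j}$ are \emph{random}, and a product such as $Z_{H,1}(1)Z_{H,1}(t)$ is not a linear combination of $Z_{H,1}(t)$ and $Z_{H,2}(t)$: by the product formula for Wiener--It\^o integrals it decomposes into chaoses of orders $0$ and $2$, but with a kernel different from that of $Z_{H,2}(t)$. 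Hence the limit cannot be ``reorganized'' into $\sum_{m=1}^k c_m Z_{H,m}(t)$ with deterministic $c_m$. For $k=2$ and $G=H_2$ your formula correctly reproduces the Dehling--Taqqu/Hosking sample-variance limit $\alpha_2 Z_{H,2}(t)-2\alpha_1^2 Z_{H,1}(1)Z_{H,1}(t)+\alpha_1^2 Z_{H,1}(1)^2 t$, which is visibly not of the asserted form. In fairness, the theorem's displayed conclusion (and the paper's one-line ending that ``$A_N$ and $B_N$ contribute'') carries the same defect; everything in your argument up to and including the continuous-mapping step identifies the correct limit, and the gap lies only in forcing that limit into the stated shape.
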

\begin{Rem}
As an example, one may take $G(z)=z^2$ in (\ref{e:s2}), which leads to the sample variance as considered in  \citet{hosking:1996:asymptotic} and \citet{dehling:taqqu:1991:bivariate}. See also Section 3.1 of \citet{bai:taqqu:2017:instability}. Comparing Theorem \ref{Thm:near higher order centering} with Theorem \ref{Thm:nclt gaussian}, centering $\{Y(n)\}$ by subtracting the sample mean may not affect the fluctuation order of the sum, but  can change the limit distribution.
\end{Rem}


The proofs of Theorem \ref{Thm:shift}, \ref{Thm:scale}, \ref{Thm:joint} and Theorems \ref{Thm:near higher order}, \ref{Thm:near higher order non-center} and
  \ref{Thm:near higher order centering} can  be found in   Section \ref{Sec:proof}, and are all based on the analysis of analytic functions.

\section{On the coincidence of Hermite and power ranks}\label{Sec:coin}

The Hermite rank has been defined in (\ref{eq:def herm rank}). We now define the \emph{power rank} which is used in the approach of \citet{ho:hsing:1997:limit} for limit theorems for transformations of long-memory  moving-average processes.
Given a function $G(\cdot)$ and a random variable $Y$ satisfying $\E G(Y)^2<\infty$, let
\begin{equation}\label{eq:G_infty}
G_\infty(y)=\E  G (Y+y)
\end{equation}
given that the expectation exists  and suppose that $G_\infty(\cdot)$ has   derivatives of  order sufficiently high. The power rank of $G(\cdot)$ with respect to $Y$ is defined as
\begin{equation}\label{eq:power rank}
\inf\{m \ge 1:~ G_\infty^{(m)}(0)\neq 0  \},
\end{equation}
where $G_\infty^{(m)}(y)$ denotes the $m$-th derivative of $G_\infty(y)$.
The following fact  was stated in \citet{ho:hsing:1997:limit} without a detailed proof. It was proved by \citet{levy-leduc:taqqu:2013:long} in the case where $G(\cdot)$ is a polynomial.
\begin{Pro}\label{Pro:coincide}
Suppose that $G(\cdot)\in L^2(\gamma)$. Then
the power rank in (\ref{eq:power rank}) coincides with the Hermite rank \eqref{eq:def herm rank} if $Y$ is Gaussian.
\end{Pro}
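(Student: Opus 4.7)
The plan is to show that when $Y$ is standard normal, the function $G_\infty(y) = \E G(Z+y)$ admits an explicit power series expansion whose coefficients are precisely (up to factorials) the Hermite coefficients $c_m$ in (\ref{eq:Herm Coeff}). Once this is established, the two rank definitions coincide immediately, since the power rank is the least $m\ge 1$ with $G_\infty^{(m)}(0) = m!\,c_m \ne 0$, which is the same as the least $m\ge 1$ with $c_m \ne 0$.

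The main computational ingredient I would establish first is the identity
\[
\E H_m(Z+y) = y^m \qquad (m\ge 0, \ y\in\mathbb{R}).
\]
This follows cleanly from the generating function $\sum_{m\ge 0} \frac{t^m}{m!} H_m(x) = e^{tx - t^2/2}$: taking $x = Z+y$ and expectations, $\E e^{t(Z+y)-t^2/2} = e^{ty}$, and matching coefficients in $t$.

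Next I would show that $G_\infty$ is well defined on all of $\mathbb{R}$ and that the Hermite series can be passed inside the expectation. Writing the shifted Gaussian density as $\phi(w-y) = \phi(w)\,e^{wy - y^2/2}$, Cauchy--Schwarz yields
\[
\int\frac{\phi(w-y)^2}{\phi(w)}\,dw = e^{y^2},
\]
so for the partial sums $G_N = \sum_{m=0}^N c_m H_m$ of $G$ in $L^2(\gamma)$,
\[
|G_\infty(y) - \E G_N(Z+y)| \;\le\; \|G-G_N\|_{L^2(\gamma)}\, e^{y^2/2} \;\longrightarrow\; 0
\]
for every $y\in\mathbb{R}$. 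Combined with the identity of the first step, $\E G_N(Z+y) = \sum_{m=0}^N c_m y^m$, this gives the representation
\[
G_\infty(y) \;=\; \sum_{m=0}^\infty c_m\, y^m \qquad (y\in\mathbb{R}),
\]
with the series converging for all $y$ (so $G_\infty$ is in fact entire). Differentiating $m$ times at $0$ yields $G_\infty^{(m)}(0) = m!\,c_m$, and the equivalence of the two ranks follows.

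There is no real obstacle, since the only subtlety is the interchange of the infinite Hermite sum with the expectation in the definition of $G_\infty$; the change-of-measure bound above handles this uniformly on compact neighborhoods of the origin (indeed on all of $\mathbb{R}$) using only the hypothesis $G\in L^2(\gamma)$. The proof is essentially a direct computation once the identity $\E H_m(Z+y) = y^m$ is noted.
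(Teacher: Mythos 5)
Your proof is correct, and it arrives at the same intermediate identity the paper's proof rests on --- the power series representation $G_\infty(y)=\sum_{m\ge 0}c_m y^m$ of (\ref{eq:ana expan G_infty}) --- but by a genuinely different mechanism. The paper (Lemma \ref{Lem:analytic}) keeps $G$ fixed inside the integral, expands the density ratio $\phi(z-x)/\phi(z)=\sum_m \frac{x^m}{m!}H_m(z)$ via the generating function, and interchanges sum and integral by Fubini, the interchange being licensed by the absolute bound $\sum_m \frac{|x|^m}{m!}\|G\|_{L^2(\gamma)}\|H_m\|_{L^2(\gamma)}=\|G\|_{L^2(\gamma)}\sum_m |x|^m/\sqrt{m!}<\infty$. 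You instead expand $G$ itself into its Hermite partial sums $G_N$, use the identity $\E H_m(Z+y)=y^m$, and pass to the limit via the operator bound $|\E f(Z+y)|\le \|f\|_{L^2(\gamma)}\,e^{y^2/2}$, i.e., the continuity of the Weierstrass transform as a map from $L^2(\gamma)$ to pointwise values. Both routes use only the hypothesis $G\in L^2(\gamma)$, both show $G_\infty$ is entire, and both then read off the power rank as the first $m\ge 1$ with $m!\,c_m=G_\infty^{(m)}(0)\neq 0$; yours isolates a clean reusable operator estimate, while the paper's absolute-summability bound is the one it recycles elsewhere (e.g., to get $\E|G(Z+x)|<\infty$). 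The one step you should flesh out is the ``matching coefficients'' justification for $\E H_m(Z+y)=y^m$: taking expectations termwise in $\sum_m \frac{t^m}{m!}H_m(Z+y)$ is itself an interchange, though a harmless one, since the identity also follows directly from $H_m(x+y)=\sum_{j=0}^m\binom{m}{j}y^{m-j}H_j(x)$ together with $\E H_j(Z)=0$ for $j\ge 1$.
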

\begin{proof}
Proposition \ref{Pro:coincide} is a direct consequence of (\ref{eq:ana expan G_infty}) below, namely,
\[ \sum_{m=0}^\infty \frac{G_\infty^{(m)}(0)}{m!} x^m =\sum_{m=0}^\infty \frac{\E[G(Z)H_m(Z)]}{m!} x^m,
\]
\end{proof}

\section{Auxiliary results}\label{Sec:aux}
We prove here some auxiliary results involving the Weierstrass transform and analytic function theory.

Let $Z$ denote throughout a standard Gaussian random variable and let $\phi=d\gamma/dx=(2\pi)^{-1/2}e^{-x^2/2}$ denote its density.  Define  the function
\begin{equation}\label{eq:G_infty}
 G_\infty(x)= \E  G (Z+x)=\int_{\mathbb{R}}{G}(z+x)\phi(z)dz=\int_{\mathbb{R}}{G}(z)\phi(z-x)dz,
\end{equation}
whenever the integrability holds.
The function $G$   may not be smooth, but the function $G_\infty$ is   smooth due to the convolution with the smooth $\phi(z)$.  $G_\infty$ is called the \emph{Weierstrass transform} of $G$ (see \citet{hirschman:widder:2005:convolution}, Chapter VIII).

First we state some preliminary   facts.
Recall that a function $f:\mathbb{R}^p\rightarrow \mathbb{R}$ is real analytic over an open domain $D\subset\mathbb{R}^p$, if at every point $y=(y_1,\ldots,y_p)\in D$, there exists a  neighborhood $B\subset D$ of $y$, such that
\[
f(x_1,\ldots,x_p)=\sum_{i_1,\ldots,i_p=1}^\infty a_{i_1,\ldots,i_p}  (x_1-y_1)^{i_1}\ldots (x_p-y_p)^{i_p}
\]
for some coefficient $a(i_1,\ldots,i_k)$, where the series converges absolutely in $B$.
It is well-known that $f$ is  infinitely differentiable, and common elementary operations including composition, affine transform, multiplication preserve  analyticity. See, e.g., \citet{krantz:parks:2002:primer}, Chapter 1.
\begin{Lem}\label{Lem:shift scale still int}~
\begin{enumerate}[(a)]
\item
If $g(\cdot+ a)$, $g(\cdot+ b) \in L^1(\phi)$, $a<b$, then
\begin{enumerate}
\item[(a1)]
$g(\cdot + x)\in L^1(\phi)$ for any $a<x<b$;
\item[(a2)]
 the Weierstrass transform $\E g(Z+x)$ is a real analytic function in $x\in (a,b)$.
\end{enumerate}
\item If $g(\cdot \times c)\in L^1(\phi)$, then $g(\cdot \times y)\in L^1(\phi)$ for any $0<y<c$.
\end{enumerate}
\end{Lem}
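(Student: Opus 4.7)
The argument splits naturally into three pieces, with (a) driven by a Gaussian domination inequality and (b) a direct pointwise comparison.

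For (a1), I would substitute $w=z+x$ to rewrite the desired integrability as $\int_{\mathbb{R}}|g(w)|\phi(w-x)\,dw<\infty$, and then establish the pointwise bound
\begin{equation*}
\phi(w-x)\ \le\ C\bigl(\phi(w-a)+\phi(w-b)\bigr),\qquad w\in\mathbb{R},
\end{equation*}
for some constant $C=C(a,b,x)$. This reduces the claim to proving that the continuous function $R(w):=\phi(w-x)/[\phi(w-a)+\phi(w-b)]$ is bounded on $\mathbb{R}$. Since $a<x<b$, direct Gaussian tail computations give $\phi(w-x)/\phi(w-b)\to 0$ as $w\to+\infty$ and $\phi(w-x)/\phi(w-a)\to 0$ as $w\to-\infty$, so $R$ vanishes at both infinities and is therefore bounded.

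For (a2), I would extend the Weierstrass transform to complex $x=u+iv$ in the strip $\Omega:=\{x\in\mathbb{C}:\mathrm{Re}(x)\in(a,b)\}$. The identity $|\phi(z-x)|=e^{v^2/2}\phi(z-u)$, combined with the domination from (a1) used uniformly in $u$ on compact sub-intervals of $(a,b)$, supplies an integrable majorant for $g(z)\phi(z-x)$ as $x$ ranges over compact subsets of $\Omega$. Since $\phi(z-x)$ is entire in $x$ for each fixed $z$, an application of Fubini's theorem followed by Morera's theorem shows that $\int g(z)\phi(z-x)\,dz$ is holomorphic on $\Omega$, hence its restriction to $(a,b)\subset\mathbb{R}$ is real analytic.

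Part (b) is short: changing variables by $u=cz$ in the hypothesis and $u=yz$ in the target expresses both integrability conditions as integrals against $\phi(u/c)$ and $\phi(u/y)$, respectively, and the inequality $0<y<c$ gives $1/y^2>1/c^2$, so $\phi(u/y)\le\phi(u/c)$ pointwise on $\mathbb{R}$; integrability is immediate. The main obstacle is the domination step in (a1). Although the behavior of $R$ at the tails is controlled by elementary Gaussian asymptotics, one also needs the dependence of the constant on $x$ (or $u$) to be sufficiently well-behaved to support the uniform majorization in (a2); this relies on an additional continuity and compactness argument applied to $R_u(w):=\phi(w-u)/[\phi(w-a)+\phi(w-b)]$ as a function of both variables.
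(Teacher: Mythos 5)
Your argument is correct. For (a1) and (b) it is essentially the paper's own proof made explicit: the paper also rests (a1) on the domination $\phi(z-x)\le A\phi(z-a)+B\phi(z-b)$ (``by exploring the shape of $\phi$'') and (b) on $\phi(u/y)\le C\phi(u/c)$; your ratio argument that $R(w)\to 0$ at $\pm\infty$ is exactly the justification the paper leaves implicit, and you correctly observe that $C=1$ already suffices in (b). Where you genuinely diverge is (a2). The paper rewrites $\E g(Z+x)=(2\pi)^{-1/2}e^{-x^2/2}(\mathcal{L}g)(-x)$ as a bilateral Laplace transform times the entire factor $e^{-x^2/2}$ and then cites Widder for the analyticity of $\mathcal{L}g$ on the strip of convergence. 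You instead prove analyticity directly: extend $x$ into the complex strip $\{a<\mathrm{Re}(x)<b\}$, use $|\phi(z-x)|=e^{v^2/2}\phi(z-\mathrm{Re}(x))$ together with a locally uniform version of the domination from (a1) to obtain an integrable majorant, and conclude holomorphy via Fubini and Morera before restricting to the real axis. The two routes exploit the same mechanism---Widder's theorem is itself proved by such a holomorphic-extension argument---but yours is self-contained, avoids the Laplace-transform reformulation, and in fact mirrors the technique the paper uses later in the proof of Lemma \ref{Lem:biv ana} for the bivariate case. Your closing caveat about uniformity of the constant in $u$ over compact subintervals is the right point to check, and it does go through: $\phi(w-u)/\phi(w-b)=\exp\left((u-b)w+(b^2-u^2)/2\right)$ decays uniformly for $u\le b'<b$ as $w\to+\infty$, and symmetrically at $-\infty$; the same majorant also gives the continuity of the extended transform needed for Morera's theorem.
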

\begin{proof}

(a) The first statement (a1) follows from the fact that $\phi(z-x)\le  A\phi(z- a)+ B \phi(z- b)$ for some sufficiently large constants $A,B>0$ by exploring the shape  of $\phi$.   To obtain  statement (a2), first as in item 2.2 of \citet{hirschman:widder:2005:convolution} Chapter VIII, the Weierstrass transform $(\mathcal{W} g)(x):=\E g(Z+x)=(g\ast \phi)(x)$ can be expressed by  a bilateral Laplace transform  $(\mathcal{L}g)(x):=\int_{\mathbb{R}}e^{-zx}g(z)dz$  as
\begin{equation}\label{eq:laplace}
(\mathcal{W}g)(x)=\frac{1}{\sqrt{2\pi}}\int_{\mathbb{R}} f(z)  e^{-(x-z)^2/2}dz=\frac{e^{-x^2/2}}{\sqrt{2\pi}} \int_{\mathbb{R}}   e^{(-x)(-z)} [f(z) e^{-z^2/2}]dz=  \frac{e^{-x^2/2}}{\sqrt{2\pi}} (\mathcal{L} g)(-x),
\end{equation}
where $g(z)=f(z) e^{-z^2/2}$. Note that $ e^{-x^2/2}$ is analytic.
Then the analyticity   of $\mathcal{W} g$  follows from the analyticity of the bilateral Laplace transform  $\mathcal{L} g$. See \citet{widder:2010:laplace} Chapter VI, p.140. The statement
(b) follows from $\phi(x/y)\le C \phi(x/c)$ for some sufficiently large constant $C>0$, because $\int_{\mathbb{R}} |g(xc)|\phi(x)dx= c^{-1}\int_{\mathbb{R}} |g(x)|\phi(x/c)dx<\infty$
\end{proof}

\begin{Lem}\label{Lem:analytic}
Suppose $G\in L^2(\phi)$. Then $\E  |G (Z+x)|<\infty$, namely, $G(x+\cdot)\in L^1(\phi)$, for all $x\in\mathbb{R}$. Furthermore,  $G_\infty(x)$ in (\ref{eq:G_infty}) admits the analytic expansion
\begin{equation}\label{eq:ana expan G_infty}
G_\infty(x)=\sum_{m=0}^\infty \frac{G_\infty^{(m)}(0)}{m!} x^m =\sum_{m=0}^\infty c_m x^m,
\end{equation}
where $G_\infty^{(m)}$ is the $m$-th derivative of $G_\infty$ and \[c_m=\frac{\E[G(Z)H_m(Z)]}{m!}\] are the coefficients of the Hermite expansion   in (\ref{eq:Herm Coeff}).

If in addition, $G(\cdot \pm M)\in L^2(\phi)$, $M>0$, then
\begin{equation}\label{eq:G^m(x)}
 G_\infty^{(m)}(x)  =  \E [G(Z+x) H_m(Z)],\qquad |x|<M,  \quad m=0,1,2,\ldots.
\end{equation}
\end{Lem}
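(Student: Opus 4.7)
The plan is to establish the three assertions in turn, relying on Cauchy--Schwarz together with the generating function of the Hermite polynomials $e^{ux-x^2/2}=\sum_{m\ge 0}H_m(u)x^m/m!$.

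For the first statement, I would rewrite $\phi(u-x)=\phi(u)e^{ux-x^2/2}$ pointwise, so that
\[
\mathbb{E}|G(Z+x)|=\int|G(u)|\phi(u-x)\,du \le \|G\|_{L^2(\phi)}\left(\int e^{2ux-x^2}\phi(u)\,du\right)^{1/2}=\|G\|_{L^2(\phi)}e^{x^2/2}<\infty
\]
by Cauchy--Schwarz, which covers every $x\in\mathbb{R}$.

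For the power-series expansion, I would substitute the Hermite generating function into $G_\infty(x)=\int G(u)\phi(u)e^{ux-x^2/2}\,du$ and interchange sum and integral. The interchange is justified by Fubini since, using $\|H_m\|_{L^2(\phi)}=\sqrt{m!}$ and Cauchy--Schwarz term by term,
\[
\sum_{m\ge 0}\frac{|x|^m}{m!}\int|G(u)H_m(u)|\phi(u)\,du\le \|G\|_{L^2(\phi)}\sum_{m\ge 0}\frac{|x|^m}{\sqrt{m!}},
\]
and the latter series converges for every $x\in\mathbb{R}$ by the ratio test. This produces $G_\infty(x)=\sum_{m\ge 0}c_m x^m$ with $c_m=\mathbb{E}[G(Z)H_m(Z)]/m!$, an entire power series; differentiating term by term at the origin then yields $G_\infty^{(m)}(0)=m!c_m=\mathbb{E}[G(Z)H_m(Z)]$.

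For (\ref{eq:G^m(x)}) at a general $|x|<M$, I would reduce to the previous case by translation. The hypothesis $G(\cdot\pm M)\in L^2(\phi)$ allows one to upgrade Lemma~\ref{Lem:shift scale still int}(a1) to an $L^2$ statement: from the shape of the Gaussian density one obtains a pointwise bound $\phi(u-x)\le A\phi(u-M)+B\phi(u+M)$ with $A,B>0$ depending on $x,M$, giving $G(\cdot+x)\in L^2(\phi)$ for every such $x$. Applying the expansion already proved to the translated function $\widetilde{G}(\cdot):=G(\cdot+x)$, whose Weierstrass transform is $\widetilde{G}_\infty(y)=G_\infty(x+y)$, I obtain
\[
G_\infty(x+y)=\sum_{m\ge 0}\frac{\mathbb{E}[G(Z+x)H_m(Z)]}{m!}y^m,
\]
and reading off the $m$-th derivative at $y=0$ yields $G_\infty^{(m)}(x)=\mathbb{E}[G(Z+x)H_m(Z)]$.

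The non-routine steps are the two interchanges: the Fubini swap in the expansion step, which hinges on the bound $\sum|x|^m/\sqrt{m!}<\infty$ for every $x$; and the pointwise Gaussian majoration $\phi(u-x)\le A\phi(u-M)+B\phi(u+M)$ used to transfer $L^2$-integrability from the endpoints to the interior of $(-M,M)$. Both are straightforward once the right estimate is identified, so the main work is bookkeeping rather than conceptual.
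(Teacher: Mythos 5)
Your proposal is correct, and for the middle assertion --- the expansion \eqref{eq:ana expan G_infty} --- it coincides with the paper's argument: both substitute the Hermite generating function $\sum_m H_m(z)x^m/m!=\exp(-(z-x)^2/2+z^2/2)$ into $G_\infty(x)=\int G(z)\phi(z-x)\,dz$ and justify the Fubini swap via the bound $\|G\|_{L^2(\phi)}\sum_m |x|^m/\sqrt{m!}<\infty$. Where you diverge is in the other two parts. For $\E|G(Z+x)|<\infty$ you apply Cauchy--Schwarz directly to $\phi(u-x)=\phi(u)e^{ux-x^2/2}$, getting the explicit bound $\|G\|_{L^2(\phi)}e^{x^2/2}$, whereas the paper extracts this as a byproduct of the same series computation run with $|G|$ in place of $G$; your version is more direct and gives a quantitative constant, but the two are essentially equivalent in difficulty. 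The more substantive difference is in \eqref{eq:G^m(x)}: the paper differentiates under the integral sign, using $\frac{d^m}{dx^m}\phi(z-x)=H_m(z-x)\phi(z-x)$ and justifying the interchange by Dominated Convergence and the Mean Value Theorem (deferring the details to the proof of Lemma \ref{Lem:biv ana}), while you instead upgrade the Gaussian majorization $\phi(u-x)\le A\phi(u-M)+B\phi(u+M)$ from $L^1$ to $L^2$ to conclude $G(\cdot+x)\in L^2(\phi)$ for $|x|<M$, and then simply re-apply the already-proved expansion to the translate $\widetilde G=G(\cdot+x)$, whose Weierstrass transform is $G_\infty(x+\cdot)$. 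Reading off coefficients then gives $G_\infty^{(m)}(x)=\E[G(Z+x)H_m(Z)]$ with no differentiation under the integral at all. This translation-and-re-expansion argument is self-contained, avoids the delicate interchange of limit and integral, and is arguably cleaner; its only cost is the extra (easy) observation that the endpoint hypothesis $G(\cdot\pm M)\in L^2(\phi)$ propagates $L^2$-membership to the whole interval, which is the same Gaussian envelope bound the paper already uses in Lemma \ref{Lem:shift scale still int}(a1).
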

\begin{proof}
Let $H_m(x)$'s be the Hermite polynomials defined in \eqref{eq:Herm Poly}. Then  by the Cauchy-Schwartz inequality and Corollary 5.1.2 of \citet{pipiras:taqqu:2017:long},   one has for any $x\in \mathbb{R}$ that
\begin{equation}\label{eq:abs sum}
\sum_{m=1}^\infty \frac{|x|^m}{m!} \int_{\mathbb{R}}|{G}(z) H_m(z)|\phi(z) dz \le \sum_{m=1}^\infty \frac{|x|^m}{m!}  \|{G}\|_{L^2(\phi)} \|H_m\|_{L^2(\phi)}=\|{G}\|_{L^2(\phi)}\sum_{m=1}^\infty \frac{|x|^m}{\sqrt{m!}}  <\infty
\end{equation}
since $\|H_m\|_{L^2(\phi)}=\sqrt{m!}$.
By Proposition 1.4.2 of  \citet{nourdin:peccati:2012:normal},  one has
\[
\sum_{m=0}^\infty \frac{x^m}{m!}H_m(z)=\exp( -(z-x)^2/2 +z^2/2) .
\]
So
\[
\sum_{m=0}^\infty \frac{ x^m}{m!} \int_{\mathbb{R}} {G}(z) H_m(z) \phi(z) dz =\int_{\mathbb{R}}{G}(z)   e^{ -(z-x)^2/2 +z^2/2} \phi(z)  dz=\int_{\mathbb{R}}{G}(z)   \phi(z-x)  dz=G_\infty(x),
\]
where the change of the order between the sum and the integral in the first equality  can be justified by (\ref{eq:abs sum}) and Fubini's Theorem.
This shows that $G_\infty$   has the desired expansion (\ref{eq:ana expan G_infty}).  Note that the same computation as above with $G$ replaced by $|G|$   shows that $\E  |G (Z+x)|=\int_{\mathbb{R}}|{G}(z)|   \phi(z-x)  dz<\infty$ by (\ref{eq:abs sum}).

The formula (\ref{eq:G^m(x)}) follows from
\[
 G_\infty^{(m)}(x)=\frac{d}{dx^m}\int_{\mathbb{R}} G(z) \phi(z-x)dz =\int_{\mathbb{R}} G(z) \frac{d}{dx^m}\phi(z-x)dz= \int G(z)H_m(z-x)\phi(z-x)dx
\]
by (\ref{eq:Herm Poly}), where the differentiation under the integral can be justified by the Dominated Convergence Theorem and the Mean Value Theorem as in the proof of Lemma \ref{Lem:biv ana} below.
\end{proof}

\begin{Lem}[Uniqueness of  Weierstrass transform]\label{Lem:Gaussian vanish}
If $f\in L^1(\phi)$ and the convolution $f\ast \phi=0$ a.e., then $f=0$ a.e..
\end{Lem}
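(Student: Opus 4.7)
My plan is to reduce the claim to the injectivity of the Fourier transform on $L^{1}(\mathbb{R})$ via the two-sided Laplace transform, in the spirit of formula (\ref{eq:laplace}). First I would set $g := f\phi$, which belongs to $L^{1}(\mathbb{R})$ by hypothesis. The elementary identity $\phi(x-y) = e^{-x^{2}/2}\, e^{xy}\, \phi(y)$ (obtained by expanding $-(x-y)^{2}/2$) will then give
\[
(f \ast \phi)(x) \;=\; e^{-x^{2}/2} \int_{\mathbb{R}} g(y)\, e^{xy}\, dy \;=:\; e^{-x^{2}/2}\, M(x),
\]
so the hypothesis $f \ast \phi = 0$ a.e.\ will become $M(x) = 0$ for a.e.\ $x \in \mathbb{R}$.

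The crux of the argument will be to show that $M(x)$ is actually absolutely convergent for \emph{every} $x \in \mathbb{R}$. To this end, I would consider the domain
\[
D \;:=\; \Bigl\{ x \in \mathbb{R}:\ \int_{\mathbb{R}} |g(y)|\, e^{xy}\, dy < \infty \Bigr\}.
\]
By the convexity of $t \mapsto e^{t}$, the set $D$ is convex, hence an interval. The hypothesis $f \ast \phi = 0$ a.e.\ implicitly requires the convolution to be absolutely convergent at almost every $x$, which through the identity above is precisely the statement that $D$ has full Lebesgue measure. Since the only interval in $\mathbb{R}$ with Lebesgue-null complement is $\mathbb{R}$ itself, this forces $D = \mathbb{R}$. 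The standard theory of the bilateral Laplace transform (\citet{widder:2010:laplace}, Ch.~VI), as already invoked in (\ref{eq:laplace}), will then let me conclude that $M$ extends to an entire function on $\mathbb{C}$.

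Being entire and vanishing a.e.\ on $\mathbb{R}$, the function $M$ must vanish everywhere on $\mathbb{R}$ by continuity, and hence on all of $\mathbb{C}$ by the identity theorem. Restricting to the imaginary axis will give
\[
0 \;=\; M(i\xi) \;=\; \int_{\mathbb{R}} g(y)\, e^{i\xi y}\, dy \;=\; \hat g(-\xi), \qquad \xi \in \mathbb{R},
\]
so the Fourier transform $\hat g$ vanishes identically. Since $g \in L^{1}(\mathbb{R})$, Fourier uniqueness on $L^{1}$ then forces $g = 0$ a.e., and as $\phi > 0$ everywhere I conclude $f = 0$ a.e. The main subtle point is the convexity/full-measure argument showing $D = \mathbb{R}$: this is what upgrades the merely a.e.\ vanishing of $M$ on the real line to identical vanishing on the complex plane, which is what allows one to reach the imaginary axis and invoke Fourier uniqueness.
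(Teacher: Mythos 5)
Your proof is correct, and it follows the same initial reduction as the paper's --- writing $(f\ast\phi)(x)=e^{-x^{2}/2}\int g(y)e^{xy}\,dy$ with $g=f\phi\in L^{1}(\mathbb{R})$, exactly as in (\ref{eq:laplace}) --- but where the paper simply cites the uniqueness theorem for the bilateral Laplace transform (\citet{widder:2010:laplace}, Ch.~VI, Theorem 6b) as a black box, you reprove that uniqueness in the case at hand: entire extension of $M$, identity theorem, restriction to the imaginary axis, and Fourier uniqueness on $L^{1}$. This makes your argument self-contained modulo only the injectivity of the Fourier transform. Your convexity/full-measure step is the right way to handle the one genuine ambiguity in the lemma's statement, namely whether ``$f\ast\phi=0$ a.e.'' presupposes absolute convergence of the convolution integral at a.e.\ $x$ (in the paper's actual application, to $G-\mathrm{const}$ with $G\in L^{2}(\gamma)$, the convolution is finite everywhere by Lemma \ref{Lem:analytic}, so the issue does not arise). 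Two small remarks: the convexity of $D$ deserves its one-line justification, $e^{(\lambda x_{1}+(1-\lambda)x_{2})y}\le\lambda e^{x_{1}y}+(1-\lambda)e^{x_{2}y}$; and you could get away with less than $D=\mathbb{R}$, since $0\in D$ automatically (as $g\in L^{1}$) and any open subinterval of $D$ already yields a vertical strip of analyticity on which $M$ vanishes identically, after which Fourier uniqueness applied to $y\mapsto g(y)e^{cy}$ on the line $\mathrm{Re}(z)=c$ finishes the argument --- but some open interval in $D$ is genuinely needed to run the identity theorem, so your full-measure argument is not superfluous.
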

\begin{proof}
First, express as in (\ref{eq:laplace}) the Weierstrass transform  by a bilateral Laplace transform. Then the conclusion follows from the uniqueness of the bilateral Laplace transform. See \citet{widder:2010:laplace} Chapter VI, Theorem 6b.
\end{proof}

\begin{Lem}\label{Lem:scale ana}
Suppose that $G(\cdot\times M)\in L^2(\phi)$ for some $M\in (1,+\infty)$. Then $\E |G(yZ)Z|<\infty$ and
\[F(y):=\E G(yZ)Z
\] is an analytic function in  $ y\in (0,M)$.
\end{Lem}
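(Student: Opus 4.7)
The first step is the integrability claim. For $y \in (0, M)$, the key elementary estimate is $\phi(u/y) \le \phi(u/M)$ (since $\phi$ is decreasing in $|u|$ and $|u|/y \ge |u|/M$). Changing variables $u = yz$ and invoking the hypothesis $G(\cdot \times M) \in L^2(\phi)$ gives $\E G(yZ)^2 = (1/y)\int G(u)^2 \phi(u/y)\,du \le (M/y)\,\E G(MZ)^2 < \infty$, and Cauchy--Schwarz then yields $\E|G(yZ)Z| \le (\E G(yZ)^2)^{1/2} < \infty$.

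For analyticity, the plan is to reduce to $g(z) := G(Mz) \in L^2(\phi)$ and $t := y/M \in (0,1)$, so that $F(y) = \E g(tZ)Z =: \tilde F(t)$; it then suffices to show $\tilde F$ is real analytic on $(0, 1)$. Expanding $g = \sum_{m \ge 0} c_m H_m$ in $L^2(\phi)$ and using the generating identity $\sum_m (u^m/m!)H_m(x) = e^{ux - u^2/2}$ together with $\E[Ze^{aZ}] = a e^{a^2/2}$, one obtains
\[
\sum_{m \ge 0} \frac{u^m}{m!}\,\E[H_m(tZ)Z] = e^{-u^2/2}\,\E[Z e^{utZ}] = ut\, e^{u^2(t^2 - 1)/2} = t \sum_{k \ge 0} \frac{(t^2 - 1)^k}{2^k k!}\, u^{2k+1},
\]
so $\E[H_m(tZ)Z] = 0$ for even $m$ and $\E[H_{2k+1}(tZ)Z] = (2k+1)!\, t\, (t^2-1)^k/(2^k k!)$. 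The interchange yielding $\tilde F(t) = \sum_m c_m\,\E[H_m(tZ)Z]$ is justified by approximating $g$ with its Hermite partial sums $g_N$ and using $\phi(u/t) \le \phi(u)$ for $t \in (0, 1)$ to bound $\E(g_N(tZ) - g(tZ))^2 \le (1/t)\,\|g_N - g\|_{L^2(\phi)}^2 \to 0$.

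The main step is establishing the radius of convergence of the resulting series
\[
\tilde F(t) = t \sum_{k \ge 0} c_{2k+1}\, \frac{(2k+1)!}{2^k k!}\, (t^2 - 1)^k.
\]
Parseval gives $(2k+1)!\,|c_{2k+1}|^2 \le \|g\|_{L^2(\phi)}^2$, and $\binom{2k}{k} \le 4^k$ yields $\sqrt{(2k+1)!}/(2^k k!) \le \sqrt{2k+1}$; hence each term is bounded by $\|g\|_{L^2(\phi)}\,\sqrt{2k+1}\,|t^2-1|^k$. The series therefore converges absolutely whenever $|t^2 - 1| < 1$, i.e., on $t \in (0, \sqrt{2}) \supset (0, 1)$, exhibiting $\tilde F$ as a polynomial in $t$ times a convergent power series in $(t^2 - 1)$, hence real analytic on $(0, 1)$. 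The main obstacle will be the closed-form Hermite evaluation above together with this combinatorial estimate; the remaining steps are routine Cauchy--Schwarz bookkeeping.
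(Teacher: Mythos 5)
Your proof is correct, but it takes a genuinely different route from the paper's. The paper establishes analyticity by substituting $u=(yz)^2$ separately on $z\ge 0$ and $z<0$, which turns $F(y)$ into a one-sided Laplace transform of $G(u^{1/2})-G(-u^{1/2})$ evaluated at $(2y^2)^{-1}$, and then invokes the classical fact that a Laplace transform is real analytic on the interior of its interval of convergence (Widder), together with the analyticity of $y\mapsto(2y^2)^{-1}$; that same integral representation is later recycled in the proof of Theorem \ref{Thm:scale} to extract the uniqueness statement. You instead expand $g=G(M\,\cdot)$ in Hermite polynomials, evaluate $\E[H_m(tZ)Z]$ in closed form from the generating function, and exhibit $\widetilde F(t)$ explicitly as $t$ times a power series in $t^2-1$. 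The individual steps all check out: the interchange is legitimately justified by $\E\bigl(g_N(tZ)-g(tZ)\bigr)^2\le t^{-1}\|g_N-g\|_{L^2(\phi)}^2$ using $\phi(u/t)\le\phi(u)$ for $t\in(0,1)$; Parseval gives $(2k+1)!\,c_{2k+1}^2\le\|g\|_{L^2(\phi)}^2$; and $\binom{2k}{k}\le 4^k$ gives $\sqrt{(2k+1)!}/(2^k k!)\le\sqrt{2k+1}$, so the series converges absolutely whenever $|t^2-1|<1$, which covers all of $t\in(0,1)$. Your approach buys self-containedness (no appeal to Laplace-transform theory) and explicit coefficients; in particular it makes transparent that $F\equiv 0$ forces every odd Hermite coefficient of $G$ to vanish, i.e.\ $G$ symmetric a.e., which is exactly the uniqueness fact the paper later deduces from the Laplace representation. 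The paper's route buys brevity and a formula that is reused elsewhere. One minor caveat worth recording: your series \emph{representation} of $\widetilde F$ is only justified on $t\in(0,1)$ (the interchange needs $t\le 1$), even though the series itself converges on $(0,\sqrt 2)$; since the lemma only claims analyticity for $y\in(0,M)$, i.e.\ $t\in(0,1)$, this is harmless.
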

\begin{proof}
Since $\phi(t/y)\le c \phi (t/M)$ for some constant $c>0$,
\[
\E |G(yZ)Z|=\int_\mathbb{R} |G(y  z)z|\phi(z)dz=\frac{1}{y^2}\int_\mathbb{R} |G(t)t|\phi(t/y)dt \le C\int_\mathbb{R} |G(M  z)z|\phi(z)dz,
\]
and hence by Cauchy-Schwartz,
\[
\E |G(yZ)Z|\le  C \|G(\cdot \times M)\|_{L^2(\phi)} <\infty
\]
for some constant $C>0$   depending only on $y$.
 Next
applying the change of variable $u=(yz)^2$ on $z\ge 0$ and on $z<0$, we have
\begin{equation}\label{eq:H scale}
F(y)=\E G(yZ)Z= \int_{\mathbb{R}}  zG(yz) \frac{e^{-z^2/2}}{\sqrt{2\pi}} dz = \frac{1}{2y^2\sqrt{2\pi}} \left[ \int_{0}^{\infty} [G(u^{1/2})-G(-u^{1/2})] \exp[-u  (2y^2)^{-1}] du   \right].
\end{equation}
The   integral  in (\ref{eq:H scale}) is a   Laplace transform  evaluated  at $(2y^2)^{-1}$.  The Laplace transform is real analytic over the the open half interval where the integrability holds (\citet{widder:2010:laplace} Chapter II, Theorem 5a). The  function $y^{-2}$ is also analytic in $y\in (0,M)$. The conclusion then  follows since elementary operations preserve analyticity.
\end{proof}

\begin{Lem}\label{Lem:biv ana}
Let $D=(-M_1,M_1)\times (0,M_2)$, $M_1>0$, $M_2>1$. Suppose that $g$ is a measurable function such that $ g(\pm M_1+ \cdot\times M_2)   \in L^2(\phi)$. Then the function
\begin{equation*}
f(x,y):=    \int_{\mathbb{R}} g(u)  \exp\left[  -\frac{(u-x)^2}{2 y^2} \right]du
\end{equation*}
is    real analytic over $(x,y)\in D$.
\end{Lem}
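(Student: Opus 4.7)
The plan is to prove real analyticity of $f$ on $D$ by showing that $f$ extends to a jointly holomorphic function of two complex variables on an open neighborhood of $D$ in $\mathbb{C}^2$; joint holomorphy on a complex neighborhood of a real open set is equivalent to joint real analyticity there. Fix an arbitrary $(x_0,y_0)\in D$ and choose $r_1,r_2>0$ small enough that the polydisk
\[
\mathcal{N}:=\{x\in\mathbb{C}:|x-x_0|<r_1\}\times\{y\in\mathbb{C}:|y-y_0|<r_2\}
\]
satisfies $|x_0|+r_1<M_1$ and $y_0+r_2<M_2$. For each fixed $u\in\mathbb{R}$, the integrand $g(u)\exp[-(u-x)^2/(2y^2)]$ is jointly holomorphic in $(x,y)\in\mathcal{N}$ (since $y$ stays bounded away from $0$), so it suffices to exhibit an integrable-in-$u$ majorant uniform over $(x,y)\in\mathcal{N}$.

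The key technical step is a complex Gaussian bound. By continuity, for $r_1,r_2$ small enough one can pick $y_1\in(y_0,M_2)$ so that $\mathrm{Re}(1/y^2)\ge 1/y_1^2$ throughout $\mathcal{N}$. Expanding $(u-x)^2/(2y^2)$, taking real parts, and absorbing the $O(|u|)$ and $O(1)$ terms into constants, this yields
\[
\left|\exp\!\left[-\frac{(u-x)^2}{2y^2}\right]\right|\le C_1\exp\!\left[C_2|u|-\frac{u^2}{2y_1^2}\right]
\]
uniformly in $(x,y)\in\mathcal{N}$, for constants $C_1,C_2$ depending on $(x_0,y_0,r_1,r_2)$. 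The hypothesis $g(\pm M_1+\cdot\times M_2)\in L^2(\phi)$ rewrites (by change of variables) as $g(u)^2 e^{-(u\mp M_1)^2/(2M_2^2)}\in L^1(du)$, so by Cauchy--Schwarz,
\[
\int_{\mathbb{R}}|g(u)|e^{C_2|u|-u^2/(2y_1^2)}\,du\le\left(\int g(u)^2 e^{-(u-M_1)^2/(2M_2^2)}du\right)^{1/2}\left(\int e^{2C_2|u|-u^2/y_1^2+(u-M_1)^2/(2M_2^2)}du\right)^{1/2},
\]
in which the first factor is finite by hypothesis and the second is finite because the residual quadratic coefficient $-1/y_1^2+1/(2M_2^2)$ is strictly negative (as $y_1<M_2$). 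Thus $C_1|g(u)|e^{C_2|u|-u^2/(2y_1^2)}$ is an integrable majorant uniform over $\mathcal{N}$.

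To conclude, for each $N$ the truncated integral $f_N(x,y)=\int_{-N}^{N}g(u)\exp[-(u-x)^2/(2y^2)]du$ is jointly holomorphic on $\mathcal{N}$ (differentiation under the integral over a compact interval is immediate). The majorant together with dominated convergence gives $f_N\to f$ uniformly on $\mathcal{N}$, so $f$ is holomorphic there as a uniform limit of holomorphic functions. Since $(x_0,y_0)\in D$ was arbitrary, $f$ is real analytic on $D$. The main obstacle is the complex Gaussian bound: one must verify that the real part of $(u-x)^2/(2y^2)$ still grows quadratically in $u$ at a rate strictly faster than the Gaussian weight supplied by the hypothesis; the strict inequality $y_0<M_2$ provides exactly the slack required.
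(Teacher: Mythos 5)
Your proof is correct, but it takes a genuinely different route from the paper's. The paper invokes Siciak's characterization of real analyticity: it first verifies that $f$ is $C^\infty$ (a somewhat lengthy check, via dominated convergence, that all partial derivatives can be taken under the integral sign), and then shows that $f$ is analytic along every real line through every point of $D$ by complexifying the line parameter, bounding $|h(u,z)|$ uniformly on a small complex disk, and applying Fubini, Cauchy's theorem and Morera's theorem. Siciak's theorem is needed there precisely because separate or directional real analyticity does not by itself imply joint real analyticity. You sidestep both Siciak and the $C^\infty$ verification by complexifying \emph{both} variables at once and producing a holomorphic extension of $f$ to a polydisk in $\mathbb{C}^2$; your uniform majorant $C_1 e^{C_2|u|-u^2/(2y_1^2)}$ is essentially the same estimate as the paper's bound on $|h(u,z)|$, but established over a two-dimensional complex neighborhood rather than along a single complex line, and the strict slack $y_0<y_1<M_2$ plays exactly the role it plays in the paper's choice of $\sigma\in(0,M_2)$. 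The payoff is a shorter and arguably cleaner argument that delivers a genuinely stronger conclusion (a holomorphic extension to a neighborhood in $\mathbb{C}^2$). The one point where you are slightly glib is the assertion that the truncated integrals $f_N$ are \emph{jointly} holomorphic: differentiation under the integral over $[-N,N]$ gives holomorphy in each variable separately, and you should close the gap by appealing to Hartogs' theorem (or Osgood's lemma together with the evident local boundedness), or else skip the truncation entirely and apply Morera's theorem plus Fubini in each variable directly to $f$ using your global majorant, which is closer in spirit to what the paper does.
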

\begin{proof}
The assumption implies that
\begin{equation}\label{eq:in L2}
g(x+\cdot\times y )   \in L^2(\phi)
\end{equation}  for all $(x,y)\in D$ by Lemma \ref{Lem:shift scale still int}.
By Theorem 1 of \citet{siciak:1970:characterization},  one needs to show that
\begin{enumerate}[(1)]
\item   $f$ is   $C^\infty$ (infinitely smooth);
\item $f$ is univariately analytic along any direction at an arbitrary point $(x_0,y_0)\in D$.
\end{enumerate}

For  part (1),  we need to check that the partial derivatives of all orders of $f(x,y)$ exist and are continuous. First, the   derivatives with respect to $x$ and $y$ can be taken under the integral sign  by applying the Dominated Convergence Theorem and the Mean Value Theorem  with the help of the following facts which are justified afterwards:
\begin{enumerate}
\item For any integers $i,j \ge 0$,
\[\frac{\partial^{i+j}}{\partial x^i \partial y^j} \exp( -\frac{(u-x)^2}{2 y^2}) =Q(u,x,y^{-1})\exp( -\frac{(u-x)^2}{2 y^2})\]
for some trivariate polynomial $Q(x_1,x_2,x_2)$.
\item  For any $x_0\in \mathbb{R} $ and  small $\delta>0$, there exists a polynomial $Q^*$ with non-negative coefficient  such that
 \[\sup_{|x-x_0|\le \delta}|Q(u,x,y^{-1})|\le Q^*(|u|,|y|^{-1});\]
  A similar bound holds for $\sup_{|y-y_0|\le \delta}|Q(u,x,y^{-1})|$.
\item If $\delta$ is chosen sufficiently small so that $\exp(-\frac{4\delta^2}{y^2})\ge 1/2,$
 then
 \[\sup_{|x-x_0|\le \delta}\exp( -\frac{(u-x)^2}{2 y^2})\le \exp( -\frac{(u-x_0+\delta)^2}{2 y^2})+\exp( -\frac{(u-x_0-\delta)^2}{2 y^2}),\] and for any $\delta>0$, \[\sup_{|y-y_0|\le \delta}\exp( -\frac{(u-x+\delta)^2}{2 y^2})=\exp( -\frac{(u-x+\delta)^2}{2 (y_0+\delta)^2}).\]
\item  For any integer $n\ge 0$ and polynomial $Q$, \[\int |g(u)| Q(|u|) \exp( -\frac{(u-x)^2}{2 y^2}) du<\infty\]  for all $(x,y)\in D$.
\end{enumerate}

Item 1 can be verified easily by induction and the elementary differentiation rules.

To see Item 2, write the polynomial as $Q(u,x,v)= a_0(x)+a_1(x)u+a_2(x)v+\ldots $. So $|Q(u,x,v)|\le |a_0(x)|+|a_1(x)u|+|a_2(x)v|+\ldots $.
Note that each coefficient $|a_i(x)|$ is a continuous function. Over a  compact interval $[x_0-\delta,x_0+\delta]$, each $|a_i(x)|$  is bounded   by a positive constant.  Replacing $|a_i(x)|$'s by these constants yields the conclusion.

For Item 3, the maximum of the function $\exp( -\frac{(u-x)^2}{2 y^2})$ in $x$ over the interval $[x_0-\delta,x_0+\delta]$ can   be either $1$ if  $u \in [x_0-\delta,x_0+\delta]$, or $\exp( -\frac{(u-x_0\pm \delta)^2}{2 y^2})$ otherwise. In the case where $u \in [x_0-\delta,x_0+\delta]$,   by the choice of $\delta$ described in Item 3 and  $|u-x_0\pm \delta|\le 2 \delta $, we have  \[\exp( -\frac{(u-x_0+\delta)^2}{2 y^2})+\exp( -\frac{(u-x_0-\delta)^2}{2 y^2})\ge1/2+1/2\ge  1;\]
 in the case where $u \notin [x_0-\delta,x_0+\delta]$, e.g., if $u<x_0-\delta$, then one has $\exp( -\frac{(u-x_0)^2}{2 y^2})\le \exp( -\frac{(u-x_0+\delta)^2}{2 y^2})$ by monotonicity. The second inequality follows easily from monotonicity.

To obtain Item 4, by Cauchy-Schwartz and (\ref{eq:in L2}), we have
\[
\int |g(u)| Q(|u|) \exp( -\frac{(u-x)^2}{2 y^2}) du\le y(2\pi)^{1/2} [\E g(x+yZ)^2]^{1/2} [\E Q(|x+yZ|)^2 ]^{1/2}<\infty
\]

Continuity of $f(x,y)$ can as well be verified  using Dominated Convergence and the facts 2$\sim$4 above.

 To check  claim (2), set $x(t)=x_0+at$, $y=y_0+bt$ where $(a,b)\in \mathbb{R}^2$ satisfying $a^2+b^2=1$.
Suppose that  $(x(t),y(t))\in D$
when  $|t|<\delta$ , where  $\delta>0$ will be adjusted smaller if necessary  later. We want to check the real analyticity of
\begin{equation}\label{eq:f(t)}
f(t):=f(x(t),y(t))=\int_{\mathbb{R}} g(u)  \exp\left[  -\frac{(u-x_0-at)^2}{2 (y_0+bt)^2} \right]du.
\end{equation}
at $t=0$.
Note that the function
\[h(u,z):= \exp(A(u,z)):=\exp\left[  -\frac{(u-x_0-az)^2}{2 (y_0+bz)^2} \right]
\] is  complex analytic in $z\in B_\delta:=\{z\in \mathbb{C}: |z|<\delta\}$ for each fixed $u\in \mathbb{R}$ since     $y_0+bz\neq 0$.   Next, by choosing $\delta$ small enough, one can ensure that for some  constant $c,\mu>0$  sufficiently large  and  $\sigma\in (0,M_2)$, such that
\begin{align}
\mathrm{Re}[A(u,z)]& =  -  \frac{\mathrm{Re}[ (u-x_0-az)^2(y_0+b\bar{z})^2 ]}{2| y_0+bz|^4 }= \frac{- y_0^2 u^2  +\mathrm{Re}[\ldots] }{2| y_0 +bz |^4 } = \frac{-  u^2  +\mathrm{Re}[\ldots] }{2| y_0^{1/2}+bz/y_0^{1/2}|^4 }\label{eq:Re}.
\end{align}
Since $y_0\in (0,M_2)$ and $|z|<\delta$ where $\delta$ is sufficiently small, there exists $\sigma\in (0,M_2)$, so that\begin{equation}\label{eq:den bound}
| y_0^{1/2}+bz/y_0^{1/2}|^4\ge \sigma^2
\end{equation} for all $|z|<\delta$. On the other hand,  the bracket $[\ldots]$ in (\ref{eq:Re}) is of the form
\[
c_1(z,\bar{z},x_0)u+c_0(z,\bar{z},x_0),
\]
where $c_1$ and $c_0$ are polynomials. For all $|z|<\delta$, one has  for some large $\mu,c>0$ that
\begin{equation}\label{eq:Re bound}
\mathrm{Re}[\ldots]\le  |c_1(z,\bar{z},x_0)||u|+|c_0(z,\bar{z},x_0)|\le 2\mu |u|-\mu^2  + 2\sigma^2 c
\end{equation}
Combining (\ref{eq:Re}), (\ref{eq:den bound}) and (\ref{eq:Re bound}), one has
\[
\mathrm{Re}[A(u,z)]\le  \frac{-   u ^2 + 2\mu |u|-\mu^2}{2\sigma^2}+c
\]
Hence for some constant $C>0$,
\begin{equation}\label{eq:bound}
|h(u,z)| = \exp(\mathrm{Re}[A(u,z)]             )\le C[  \exp \left( -  (u^2+\mu)^2/(2\sigma^2)\right) +\exp\left(-  (u^2-\mu)^2/(2\sigma^2)\right) ]  ,\quad z\in B_\delta.
\end{equation}
In   $\int_{\mathbb{R}}| g(u) h(u,z)| du$,
replace $|h(u,z)|$ by the preceding bound (\ref{eq:bound}), and note that $g(\pm\mu+\cdot\times \sigma)   \in L^1(\phi)$ for any $\mu\in \mathbb{R}$ and $\sigma\in (0,M)$   by Lemma \ref{Lem:shift scale still int} (b) and Lemma \ref{Lem:analytic}. Then one deduces that
\[
\sup_{z\in B_\delta}\int_{\mathbb{R}}| g(u) h(u,z)| du<\infty.
\]
  This  fact with Fubini's Theorem justifies the following order change of integrals:
\[
\oint_{\Delta}f(z)dz=\oint_{\Delta}dz\int_{\mathbb{R}} g(u)h(u,z)du  = \int_{\mathbb{R}} g(u) du\oint_{\Delta} h(u,z)  dz =0,
\]
where $\Delta$ is a closed triangle within $B_\delta$, and the last equality is due to  Cauchy's theorem since $ h(u,z)$ is complex analytic in $z\in B_\delta$. Then the complex analyticity of $f(z)$ over ${B}_\delta$ follows from Morera's Theorem (Theorem 10.17 of \citet{rudin:1987:real}). Restricting $B_\delta$ to $\mathrm{Im}(z)=0$ yields the real analyticity.
\end{proof}

\section{Proof of the theorems}\label{Sec:proof}
We prove here Theorems \ref{Thm:shift}, \ref{Thm:scale}, \ref{Thm:joint} involving the instability of the Hermite rank and
and Theorems \ref{Thm:near higher order}, \ref{Thm:near higher order non-center} and   \ref{Thm:near higher order centering} involving "near higher order rank".

\begin{proof}[Proof of Theorem \ref{Thm:shift}] Suppose that the function G is not a.e.\ constant.
Define $E=\{x\in (-M,M):~ G(\cdot+x) \text{ has Hermite rank} \ge 2 \}$. Note first that we have
$E= \{x\in (-M,M): ~G_\infty^{(1)}(x)=0\}$ in view of Lemma \ref{Lem:analytic} applied to $G(\cdot+x)$. Indeed, since the Hermite rank of $G(\cdot+x)$ is greater than $1$, we have $G_\infty^{(1)}(x)=\E G(Z+x)H_1(Z)=0$.
Now suppose by contradiction that $E$ has an accumulation point in $(-M,M)$.
Since $G_\infty$ is analytic by Lemma \ref{Lem:analytic}, so is the derivative $G_\infty^{(1)}(x)$. By \citet{rudin:1976:principles} Theorem 8.5,  $G_\infty^{(1)}$ is identically zero on $(-M,M)$ and hence $G_\infty$ is a constant on $(-M,M)$. But by (\ref{eq:G_infty}), $G_\infty=G\ast \phi$. By Lemma \ref{Lem:Gaussian vanish} and linearity, ${G}$ is a.e.\ a constant as well  which contradicts the theorem assumption.
\end{proof}

\begin{proof}[Proof of Theorem \ref{Thm:scale}]
Suppose that the function G is not a.e.\ symmetric.
Now the set with Hermite rank $\ge 2$  is $E=\{y\in (0,M): ~F(y):=\E G(yZ)Z=0\}$.
 The proof is similar to that of Theorem \ref{Thm:shift} above. Suppose by contradiction that $E$ has an accumulation point in $(0,M)$. Since  $F$ is analytic on $(0,M)$ by Lemma \ref{Lem:scale ana}, so $F(y)$ is identically zero on $(0,M)$.
Then we apply  the uniqueness of the Laplace transform  (\citet{widder:2010:laplace}  Chapter II, Theorem 6.3)
to relation (\ref{eq:H scale}) to conclude that $G(z)=G(-z)$ a.e., which contradicts   the assumption.
\end{proof}


\begin{proof}[Proof of Theorem \ref{Thm:joint}]
Define
\[F(x,y)=\E[ Z G(x+yZ)].
\] As before $E=\{(x,y)\in D: ~F(x,y)=0 \}$.
By a change of variable $z= ( u-x)/y$ we can write
\begin{equation}\label{eq:F(x,y)}
F(x,y)= \frac{1}{y^2\sqrt{2\pi}} \int_{\mathbb{R}}( u-x)G(u) \exp\left[  -\frac{(u-x)^2}{2 y^2} \right] du.
\end{equation}
Then $F(x,y)$ is a bivariate real analytic function by Lemma \ref{Lem:biv ana} since
$ (\pm M_1 + M_2 u) G(\pm M_1+M_2 u )\in L^2(\phi)$ by Cauchy-Schwartz.  If $F(x,y)\equiv 0$, which implies $F(x,1)\equiv 0$, then the Hermite rank of $G(\cdot+x)$ is greater than $1$ for all $x\in (-M_1,M_1)$, which contradicts Theorem \ref{Thm:shift}.
 So $F(x,y)$ is not identically zero.
The claimed properties of $E$, the zero set of the analytic $F$, then follow from \citet{mityagin:2015:zero} (see also \citet{krantz:parks:2002:primer}, Section 4.1).

\end{proof}

\begin{proof}[Proof of Theorem \ref{Thm:nonlinear per}]
If $G$ is not a constant a.e., by assumption  (b), the analytic $U(\theta)$ is not identically zero on $D$. So taking into account assumption (a),  the zero set $E$ has the claimed properties by \citet{mityagin:2015:zero}.
\end{proof}

\begin{proof}[Proof of Theorem \ref{Thm:near higher order}]

First by the Hermite expansion  (\ref{eq:def herm rank expan}) we can write:
\begin{equation}\label{e:G2}
G(Z+x_N)=\sum_{m=0}^\infty c_m(x_N)  H_m(Z),
\end{equation}
 We set as in (\ref{eq:G_infty}) that $G_\infty(x_N)=\E G(Z+x_N)$, where $Z\sim N(0,1)$. In view of Lemma \ref{Lem:analytic}, we have
\[
c_m(x_N)=\frac{1}{m!}\E  G(Z+x_N)H_m(Z) =\frac{1}{m!} G_\infty^{(m)}(x_N).
\]
and
\begin{equation}\label{eq:H_infty}
F_\infty(x_N):= \E G(Z+x_N)^2=\sum_{m=0}^\infty c_m(x_N)^2 \E H_m(Z)^2=\sum_{m=0}^\infty c_m(x_N)^2m!.
\end{equation}

Since $G$ has Hermite rank   $k$, in view of Lemma \ref{Lem:analytic} we have $G_\infty^{(1)}(0)=\ldots=G_\infty^{k-1}(0)=0\neq G_\infty^{k}(0)$. By the analytic expansion,
\[
G_\infty(x_N)=\frac{G_\infty^{(k)}(0)}{k!} x_N^k+O(x_N^{k+1})
\]
and
\[
\quad G_\infty^{(m)}(x_N) =\frac{d^mG}{dx^m}  (x_N)  =\frac{1}{(k-m)!} g_k x_N^{k-m} +O(x_N^{k-m+1}), \quad m=1,\ldots,k ,
\]
where
\[g_k=G_\infty^{(k)}(0).\]
 Then
\begin{align}\label{eq:S_{[Nt]}}
S_{[Nt]}(x_N)&= \sum_{n=1}^{[Nt]}\Big[G(Y(n)+x_N)-\E G(Y(n)+x_N)\Big]\notag\\
&=A_N(t)+B_N(t)+C_N(t)+D_N(t) +H.O.T.=A_N(t)+B_N(t)+R_N(t) +H.O.T.
\end{align}
In (\ref{eq:S_{[Nt]}}),
\begin{align}
&A_N(t)= (x_N^{k-1}N^{H})  \frac{g_k}{(k-1)!} \left[\frac{1}{N^H}\sum_{n=1}^{[Nt]}  Y(n) \right], ~~ B_N(t)=\sum_{m=2}^{k-1} (x_N^{k-m} N^{H[m]}) \frac{g_k}{(k-m)!}  \left[\frac{1}{N^{H[m]}}\sum_{n=1}^{[Nt]}  H_k(Y(n))\right]\notag\\
& C_N(t)=  N^{H[k]} \frac{g_k }{k!}\left[ \frac{1}{N^{H[k]}} \sum_{n=1}^{[Nt]}  H_k (Y(n)) \right],~~ D_N(t)=\sum_{m=k+1}^\infty c_m(x_N)  \sum_{n=1}^{[Nt]}   H_m(Y(n)),~~R_N(t)=C_N(t)+D_N(t),\label{eq:A-D}
\end{align}
where
\begin{equation}\label{eq:H(m)}
H[m]=\max\Big((H-1)m+1,~ 1/2\Big),
\end{equation}
 $\sum_{m=a}^b\ldots$ is understood as zero if $a>b$, and
  $H.O.T.$ stands for a higher-order term that will be asymptotically negligible as $N\rightarrow\infty$ after suitable normalization. In  (\ref{eq:A-D}), the expressions within a bracket  $[ ~\cdot ~]$ converge  in distribution and  their variances also converge. See, e.g., \citet{pipiras:taqqu:2017:long}, Chapter 5. The term $A_N(t)$ involves the first Hermite polynomial since $H_1(Y)=Y$; the term $B_N(t)$ involves the second up to the $(k-1)$th Hermite polynomial; the term $C_N(t)$ involves the $k$th Hermite polynomial; the term $D_N(t)$  involves all the Hermite polynomials of orders higher than $k$. $R_N(t)$ involves the terms $m\ge k$ in (\ref{e:G2}).

 We assume  for simplicity that $H\neq 1/(2m)$, $m=1,\ldots,k$. Otherwise the proof undergoes a slight modification involving an logarithmic factor.
 We have to consider the behavior of $ \sum_{n=1}^{[Nt]}   H_m(Y(n))$ for all $ m\geq 1 $.
  Let
\begin{equation}\label{eq:k_0}
k_0=\sup\{m\in  \mathbb{Z}_+:~ H>  1-1/(2m)\}.
\end{equation}

\noindent
In view of (\ref{eq:k_0}), when $m>k_0$ the central limit theorem in Theorem \ref{Thm:nclt gaussian} holds and when $m<k_0$ it is the non-central limit theorem that holds. Alternatively,
the central limit theorem  holds when
$H<1-1/(2k)$ and the non-central limit theorem holds when $H>1-1/(2k)$.

\bigskip
\noindent $\bullet$ Consider first  the central limit case where $H<1-1/(2k)$.

 In this case, $A_N(t)$ is associated with the order $x_N^{k-1} N^{H}$ and  $R_N(t)$ is   associated with the order $N^{1/2}$.
We now focus on $B_N(t)$ and claim that it is asymptotically negligible compared with either $A_N(t)$ or $R_N(t)$, namely,
\begin{equation}\label{eq:red clt}
x_N^{k-m} N^{H[m]}\ll \max( x_N^{k-1} N^{H},  N^{1/2}),
\quad m=2,\ldots,k-1.
\end{equation}
Indeed, when $m>k_0$, this is obvious since $H[m]=1/2$; Suppose now $m\le k_0$. In this case, $H[m]=(H-1)m+1$. However, in cases (a) and (b), where $x_N\ll$ or $\approx N^{(1/2-H)/(k-1)}$, the factor $x_N^{k-m}N^{H[m]}$ has an exponent bounded by
\begin{equation}
\frac{(1/2-H)(k-m)}{k-1}+(H-1)m+1=\frac{(m-1)k}{k-1} H +\frac{k+m-2km}{2(k-1)}+1<\frac{1}{2},
\end{equation}
where the last inequality can be obtained by plugging in the inequality $H<1-1/(2k)$ and some elementary computation. Hence in cases (a) and (b),  the relation (\ref{eq:red clt}) is proved.
In the case (c) where  $x_N\gg  N^{(1/2-H)/(k-1)}$,  it can be checked that $x_N^{k-m} N^{(H-1)m+1}\ll  x_N^{k-1} N^{H} $ using the inequality $\frac{1/2-H}{k-1}>H-1$, and the latter inequality follows from $H<1-1/(2k)$.
We have thus proved (\ref{eq:red clt}) and hence $B_N(t)$ is asymptotically negligible. We are left with $A_N(t)$ and $R_N(t)$.

The different asymptotic regimes  in the central limit case in Theorem \ref{Thm:near higher order} come about when comparing the order $x_N^{k-1} N^H$ of $A_N(t)$ with the order $N^{1/2}$ of $R_N(t)=C_N(t)+D_N(t)$. But we also have to deal with the convergence of the processes.
By \citet{bai:taqqu:2013:1-multivariate},  we have the joint convergence
\begin{equation}\label{eq:CLT joint}
 \left(\frac{1}{N^{H}}\sum_{n=1}^{[Nt]} Y(n), \frac{1}{N^{1/2}}\sum_{n=1}^{[Nt]}  \sum_{m=k}^\infty \frac{g_m}{m!} H_m(Y(n)) \right) \ConvFDD  \left(c_1 Z_{H,1}(t), c_2 B(t) \right)
\end{equation}
where the fractional Brownian motion $Z_{H,1}(t)$ and the Brownian motion $B(t)$ are independent. From (\ref{eq:CLT joint}) we set
\[
R_N'(t)=\sum_{n=1}^{[Nt]}  \sum_{m=k}^\infty \frac{g_m}{m!} H_m(Y(n)),
\]
which is $R_N(t)$ with $x_N=0$.

We cannot use (\ref{eq:CLT joint}) directly because we have $R_N(t)$ instead of $R_N'(t)$. We thus need to compare them first.
 Since $Y(n)$ is standardized, we have  $|\gamma_Y(n)|^{m}\le |\gamma_Y(n)|^{k}$ if $m\ge k$, where $\gamma_Y(n)=\Cov[Y(n),Y(0)]$. Thus  by a computation similar to those on p.299 of \citet{pipiras:taqqu:2017:long} using the orthogonality of the Hermite polynomials,
\begin{align}\label{eq:diff D B}
 \E |R_N(t)-R_N'(t)|^2 &=  \sum_{m>K}  m! (c_m(x_N) -c_m(0))^2 \sum_{|n|<[Nt]} ([Nt]-|n|) \gamma_Y(n)^m\notag\\
 & \le (Nt)\left(\sum_{n=-\infty}^\infty  |\gamma_Y(n)|^k\right)  \sum_{m=k}^\infty  m! (c_m(x_N)-c_m(0))^2.
\end{align}
Note that  for  a $K>k$,
\begin{align}\label{eq:near int 1}
&\sum_{m=k}^\infty  m! (c_m(x_N)-c_m(0))^2\le \sum_{m=k}^K  m! (c_m(x_N)-c_m(0))^2+ 2\sum_{m>K}  m! ( c_m(x_N)^2+ c_m(0)^2).
\end{align}
Recall $F_\infty(x)=\sum_{m=0}^\infty m! c_m(x)^2$  in (\ref{eq:H_infty}), which is analytic in view of Lemma \ref{Lem:shift scale still int}.
Hence
\[
\sum_{m>K}  m!  c_m(x_N)^2 = F_\infty(x_N)-F_\infty(0)+F_\infty(0)- \sum_{m=0}^K  m!   c_m(x_N)^2.
\]
As $N\rightarrow\infty$, by the continuity of $F_\infty$ and $c_m(x)=G^{(m)}_\infty(x)/m!$, we have
\begin{equation}\label{eq:near int 2}
\sum_{m>K}  m!  c_m(x_N)^2
\rightarrow  \sum_{m>K} m! c_m(0)^2.
\end{equation}
Take the limit $N\rightarrow\infty$ in  (\ref{eq:near int 1}) using (\ref{eq:near int 2}) and the continuity of $c_m$,  the right-hand side of (\ref{eq:near int 1}) is only left with the term $2\sum_{m>K} m! c_m(0)^2$, which tends to $0$  as $K\rightarrow\infty$. Combining this with (\ref{eq:diff D B}), we have
\begin{equation}\label{eq:approx clt}
\lim_{N\rightarrow\infty} N^{-1} \E | R_N(t)-R_N'(t)|^2= 0.
\end{equation}
It thus follows from (\ref{eq:CLT joint}) that
\[
\left(\frac{1}{N^{H}}\sum_{n=1}^{[Nt]} Y(n), \frac{1}{N^{1/2}}\sum_{n=1}^{[Nt]}  R_N(t) \right) \ConvFDD  \left(c_1 Z_{H,1}(t), c_2 B(t) \right).
\]
In case (a), we have $x_N^{k-1} N^{H}\ll N^{1/2}$ and thus as $N\rightarrow\infty$, $A_N(t)$ is negligible compared to $R_N(t)$ and thus we get $N^{-1/2}R_N(t)\ConvFDD c_2 B(t)$. In case (c), $x_N^{k-1}N^H \gg N^{1/2}$ and  the limit is $c_1 Z_{H,1}(t)$. In case (b), where $x_N^{k-1}N^H \approx N^{1/2}$, both limits appear.
Finally, note that $N^{-H} x_N^{1-k}  A_N(t)$ is proportional to the first component on the left-hand side  of (\ref{eq:CLT joint}), while by (\ref{eq:approx clt}), the term $N^{-1/2}R_N(t)$  can be asymptotically replaced by $N^{-1/2}R_N'(t)$, which is  the second component  on the left-hand side of (\ref{eq:CLT joint}). The rest of the proof in the central limit case can be carried out easily.

\medskip

\noindent $\bullet$ Now we consider the non-central limit case  where $H>1-\frac{1}{2k}$.
  We have to study, as before, the behavior of $ \sum_{n=1}^{[Nt]}   H_m(Y(n))$ for all $m\geq 1$.

  We first consider the terms $A_N(t)$, $B_N(t)$ and $C_N(t)$ which involve $m \leq k$.
In this case, we have the following relation: any $n,m \in \mathbb{Z}_+$,
\[x_N^{k-m}N^{(H-1)m+1} \ll x_N^{k-n} N^{(H-1)n+1} \iff    x_N\ll N^{H-1},\quad \text{which holds with $\ll$ replaced by $\approx$ or $\gg$ as well.}
\]
Hence  in case (a),   the term $A_N(t)$ in (\ref{eq:S_{[Nt]}}) contributes; in case (b), the terms $A_N(t)$, $B_N(t)$ and $C_N(t)$ all contribute; in case (c),  the term  $C_N(t)$ contributes.

We shall show below that the term $D_N(t)$, which involves $m>k$, is  negligible.
Set
$\gamma_Y(n)=\Cov(Y(n),Y(0))\approx n^{2H-2}$.  By a similar computation as in (\ref{eq:diff D B}),
\begin{equation}\label{eq:E D_N^2}
\E D_N(t)^2 = \sum_{m>k}  m! c_m(x_N)^2 \sum_{|n|<[Nt]} ([Nt]-|n|) \gamma_Y(n)^m .
\end{equation}
For an arbitrarily small $\epsilon>0$,  we have  for $m>k$ that
\begin{equation}\label{eq:bound nclt}
\left|\sum_{|n|<[Nt]} ([Nt]-|n|) \gamma_Y(n)^m \right|\le  c_1 (Nt) \sum_{|n|<[Nt]}  n^{(2H-2)(k+1)}  \le c_2 (Nt)^{2H[k+1]},
\end{equation}
for some constants $c_1,c_2>0$.
where $H[m]$ is as in (\ref{eq:H(m)}). Since $H[k+1] <H[k]$ when $H>1-1/(2k)$,  in view of (\ref{eq:E D_N^2}),  (\ref{eq:bound nclt}) and (\ref{eq:H_infty}),
\[
\E D_N(t)^2\le  c_2 (Nt)^{2H[k+1]} F_\infty(x_N)\ll N^{2H[k]} \approx \E C_N(t)^2.
\]
So the order of $D_N(t)$ is always dominated by that of $C_N(t)$. So we only need to focus on $A_N(t)$, $B_N(t)$ and $C_N(t)$. Then  the rest of the proof can be carried out  using the following consequence of \citet{bai:taqqu:2013:1-multivariate}:
\begin{equation}\label{eq:NCLT joint}
\left(\frac{1}{N^{(H-1)m+1}}\sum_{n=1}^{[Nt]}     H_m(Y(n)), ~m=1,\ldots,k \right) \ConvFDD  \Big(c_m Z_{H, m}(t), ~ m=1,\ldots,k \Big)
\end{equation}
where the Hermite processes $Z_{H,m}(t)$'s are defined through the same Brownian motion in (\ref{eq:Herm process}).

\end{proof}

\begin{proof}[Proof of Theorem \ref{Thm:near higher order non-center}]
The proof is done  as the proof of Theorem \ref{Thm:near higher order} based on (\ref{eq:CLT joint}) and (\ref{eq:NCLT joint}), and  we thus only provide an outline.
By Taylor expansion of $G_\infty$ in (\ref{eq:ana expan G_infty}), and since   $\E G(Y(n))=g_0=G_\infty(0)=0$ and   the Hermite rank is $k$, we have
\[
\E G(Y(n)+x_N)=G_\infty(x_N)=\frac{g_k}{k!}  x_N^k+O(x_N^{k+1}).
\]
Adding the leading term above to (\ref{eq:S_{[Nt]}}), we have
\begin{align}\label{eq:sum no center}
\widetilde{S}_{[Nt]}(x_N)=Z_N(t)+A_N(t)+B_N(t)+C_N(t)+D_N(t) +H.O.T.=Z_N(t)+A_N(t)+B_N(t)+R_N(t) +H.O.T.
\end{align}
where
\[
Z_N(t)=\frac{g_k }{k!} (x_N^k [Nt]) +H.O.T.
\]

In the central limit case where $H<1-1/(2k)$,   the term $B_N(t)$  is negligible compared to $A_N(t)$ and $R_N(t)$ as in the proof of Theorem \ref{Thm:near higher order} above.
We thus compare the orders $x_N^k N$ of $Z_N(t)$, $x_N^{k-1} N^H$ of $A_N(t)$ and $N^{1/2}$ of $R_N(t)$. Indeed, if $x_N\ll N^{-1/(2k)}$, then $x_N^{k}N\ll N^{1/2}$ and $x_N^{k-1} N^H \ll N^{1/2}$, so only $R_N(t)$ contributes; if $x_N\approx N^{-1/(2k)}$, then $x_N^{k}N\approx N^{1/2}$ and $x_N^{k}N\ll N^{1/2}$, so  $Z_N(t)$ and  $R_N(t)$ contribute; if  $x_N\gg N^{-1/(2k)}$, then $x_N^{k}N\gg N^{1/2}$ and $x_N^{k}N\gg x_N^{k-1} N^H$, so only $Z_N(t)$ contributes.

A similar analysis can be carried out in the non-central limit case. As in the proof of Theorem \ref{Thm:near higher order},  the term $B_N(t)$ only contributes in the case $x_N\approx N^{H-1}$.  The asymptotic regimes in the other cases are determined by   comparing the orders $x_N^k N$ of $Z_N(t)$, $x_N^{k-1} N^H$ of $A_N(t)$ and $N^{(H-1)k+1}$ of $C_N(t)$.
\end{proof}

\begin{proof}[Proof of Theorem \ref{Thm:near higher order centering}]
The proof is again similar to those of Theorem \ref{Thm:near higher order} and \ref{Thm:near higher order non-center}, and we thus only provide an outline.
Since $G(\cdot)$ is a polynomial, in (\ref{eq:A-D}) the $\infty$ in the sum defining $D_N(t)$ is replaced by a finite integer. This is important since the arguments leading to (\ref{eq:approx clt}) can no longer be applied.

The key is to note that $-\bar{Y}_N$ is associated with the order $N^{H-1}$, which replaces $x_N$ in all the terms in (\ref{eq:sum no center}). More precisely,
in the central limit case where $H<1-1/(2k)$,  comparing the order $N^{k(H-1)}$ of $Z_N(t)$,  the order $N^{(k-m)(H-1)+H[m]}$ of $A_N(t)$ and $B_N(t)$, $0<m<k$,  and the order $N^{1/2}$ of $R_N(t)$, where $H[m]$ is as in (\ref{eq:H(m)}), one can find that $R_N(t)$ is the contributing term. The conclusion then follows from (\ref{eq:CLT joint})  and the Slutsky Lemma using the continuity of the coefficients $c_m(\cdot)$. In the non-central limit case where $H>1-1/(2k)$, we compare the order $N^{k(H-1)}$ of $Z_N(t)$,  the order $N^{k (H-1)+1}$ of $A_N(t),B_N(t),C_N(t)$, $0<m\le k$,       and the order $N^{H[k+1]}$ of $D_N(t)$. In this case, the terms $A_N(t)$ and $B_N(t)$ contribute.

\end{proof}

\bigskip
\textbf{Acknowledgment.} This work was partially supported by the NSF grant  DMS-1309009  at Boston University.

\bibliographystyle{plainnat}
\bibliography{Bib}
\bigskip\noindent
Shuyang Bai\\
Department of Statistics\\
University of Georgia\\
Athens, GA 30606\\
\emph{bsy9142@uga.edu}

\medskip\noindent
Murad S. Taqqu\\
Department of Mathematics and Statistics\\
Boston University\\
Boston, MA 02215\\
\emph{murad@bu.edu}

\end{document}